%

\input ./style/arxiv-general.cfg
\documentclass[aop,MSNbibl,seceqn,nameyear,dvips]{arximspdf}
\makeatletter
   \@ifpackageloaded{graphicx}{}{\usepackage{graphicx}}
\makeatother

%

\doi{10.1214/14-AOP997}
\volume{44}
\issue{2}
\pubyear{2016}
\firstpage{1107}
\lastpage{1133}
\docsubty{FLA}

\makeatletter
\newcommand{\rrvert}{\vert}
\newcommand{\rrVert}{\Vert}
\newcommand{\llvert}{\vert}
\newcommand{\llVert}{\Vert}
\renewcommand{\mid}{|}
\newcommand{\underset}[2]{\mathop{#2}\limits_{#1}}
\newcommand{\maxmin}{\operatorname{maxmin}}
\newcommand{\argmax}{\operatorname{argmax}}
\newcommand{\cupp}{\bigcup}
\newtheorem{theorem}{Theorem}[section]
\newtheorem{lemma}[theorem]{Lemma}
\newtheorem{proposition}[theorem]{Proposition}
\newproclaim{definition}[theorem]{Definition}
\newproclaim{remark}[theorem]{Remark}
\newproclaim{step}{Step}
\newproclaim{con}{Conjecture}
\makeatother

\begin{document}
\begin{frontmatter}

\title{Zero-sum repeated games: Counterexamples to the existence of
the asymptotic value and the conjecture $\operatorname{maxmin}=\operatorname{lim}v_n$}
\runtitle{Nonexistence of the asymptotic value}

\begin{aug}
\author[A]{\fnms{Bruno}~\snm{Ziliotto}\corref{}\thanksref{T1}\ead[label=e1]{ziliotto@math.cnrs.fr}}
\runauthor{B. Ziliotto}
\affiliation{GREMAQ, Universit\'{e} Toulouse 1 Capitole}
\address[A]{GREMAQ\\
Universit\'{e} Toulouse 1 Capitole\\
Manufacture des Tabacs\\
21 all\'{e}e de Brienne\\
31000 Toulouse\\
France\\
\printead{e1}}
\end{aug}
\thankstext{T1}{Supported by the ANR Jeudy (ANR-10-BLAN 0112) and the
GDR 2932.}

%
\received{\smonth{12} \syear{2013}}
%
\revised{\smonth{12} \syear{2014}}

%
\begin{abstract}
Mertens [In \textit{Proceedings of the {I}nternational {C}ongress of {M}athematicians}  (\textit{Berkeley}, \textit{Calif.}, 1986)
(1987) 1528--1577 {Amer. Math. Soc.}]
proposed two general conjectures about repeated games:
the first one is that, in any two-person zero-sum repeated game, the
asymptotic value
exists, and the second one is that, when Player~1 is more informed than
Player~2, in
the long run Player~1 is able to guarantee the asymptotic value.
We disprove these two long-standing conjectures by providing an example
of a
zero-sum repeated game with public signals and perfect observation of
the actions, where the value of the $\lambda$-discounted game does not
converge when $\lambda$ goes to 0.
The aforementioned example involves seven states, two actions and two
signals for each player. Remarkably, players observe the payoffs, and
play in turn.
\end{abstract}

%
\begin{keyword}[class=AMS]
\kwd[Primary ]{91A20}
\kwd[; secondary ]{91A05}
\kwd{91A15}
\end{keyword}
\begin{keyword}
\kwd{Repeated games}
\kwd{asymptotic value}
\kwd{public signals}
\kwd{symmetric information}
\kwd{stochastic games}
\end{keyword}
\end{frontmatter}

\section*{Notations}
The notation ``$X:=Y$'' means ``$X$ is defined by the expression $Y$.''

The set of nonnegative integers is denoted by $\mathbb{N}$, and
$\mathbb{N}^*:=\mathbb{N} \setminus \{0 \}$. The set of
real numbers is denoted by $\mathbb{R}$.

The complementary of $B$ in $A$ is denoted by $A \setminus{B}$.

If $x \in\mathbb{R}$, the integer part of $x$ is denoted by $
\lfloor x  \rfloor$.

If $ (C,\mathcal{C} )$ is a measurable space, we denote by
$\Delta(C)$ the set of probability measures on $C$, and $\Delta_f(C)$
the set of probability measures on $C$ with finite support. We call
$\delta_c$ the Dirac measure at $c$.

If $C_0 \subset C$ is a finite set and $(\alpha_c)_{c \in C_0} \in
\Delta(C_0)$, then we write $\sum_{c \in C_0} \alpha_c \cdot c$ for
$\sum_{c \in C} \alpha_c \delta_c$.

If $X$ is a bounded real random variable, $\mathbb{E}(X)$ is the
expectation of $X$.

If $f\dvtx A \rightarrow\mathbb{R}$ is a bounded function,
$\llVert  f \rrVert  _{\infty}$ is $\sup_{x \in\mathbb
{R}} \llvert  f(x)\rrvert  $, and if $B \subset A$,
$\argmax_{x \in B} f:= \{x \in B \mid   \forall y \in
B  f(x) \geq f(y)  \}$.

If $f\dvtx  \mathbb{R} \rightarrow\mathbb{R}$ is differentiable, $\frac
{d f}{d x}$ is the derivative of $f$.

Let $a \in\mathbb{R}$ and $f$, $g$ and $h$ be real functions. If
$f(x)-g(x)$ is little-o of $h(x)$ when $x$ goes to $a$, we write $f(x)
\underset{x \rightarrow a}{=} g(x)+o(h(x))$. If $f(x)$ is equivalent
to $g(x)$ when $x$ goes to $a$, we write $f(x) \underset{x \rightarrow
a}{\sim} g(x)$.


\section*{Introduction}\label{sec1int}
The general model of two-person zero-sum repeated game was introduced
in \citet{MSZ}, Chapter IV. Such a game is described by a finite set of
states $K$, a finite set of actions $I$ (resp., $J$) for Player~1
(resp., 2), a finite set of signals $A$ (resp., $B$) for Player~1
(resp., 2), a payoff function $g\dvtx K \times I \times J \rightarrow
\mathbb{R}$, and a transition function $q\dvtx K \times I \times J
\rightarrow\Delta(K \times A \times B)$. The game proceeds as
follows. Before the game starts, a triplet $(k_1,a_0,b_0)$ is drawn
according to an initial probability distribution $p \in\Delta(K
\times A \times B)$. The state $k_1$ is the initial state, and Player~1
(resp.,~2) receives the private signal $a_0$ (resp., $b_0$).
At stage $m \geq1$, both players choose an action simultaneously and
independently, $i_m \in I$ (resp., $j_m \in J$) for Player~1 (resp.,
2). The payoff at stage $m$ is $g_m:=g(k_m,i_m,j_m)$. A triplet
$(k_{m+1},a_m,b_m)$ is drawn from the probability distribution
$q(k_m,i_m,j_m)$. The signal $a_m$ (resp., $b_m$) is announced to
Player~1 (resp., 2). The game then moves on to state $k_{m+1}$, and
enters the next stage.

For $\lambda\in(0,1]$, in the \textit{$\lambda$-discounted game},
the goal of Player~1 (resp., 2) is to maximize (resp., minimize) the
expected Abel mean of stage payoffs
$\sum_{m \geq1} \lambda(1-\lambda)^{m-1} g_m$. For $n \in\mathbb
{N}^*$, in the \textit{$n$-stage game}, the goal of Player~1 (resp.,
2) is to maximize (resp., minimize) the expected Cesaro-mean of stage
payoffs $1/n \sum_{m=1}^n g_m$. These two games have a value, denoted,
respectively, by $v_{\lambda}(p)$ and $v_n(p)$. Two important
conjectures were stated by Mertens [see \citet{M86}, page~1572 and \citet{MSZ},
Chapter~VIII, pages 378~and~386].
%

\begin{con}\label{con1}
In a zero-sum repeated game, the asymptotic value exists, that is, the
sequences of functions $(v_n)$ and $(v_{\lambda})$ converge pointwise
to the same limit, when $n$ goes to infinity and $\lambda$ goes to 0.
\end{con}

\begin{con}\label{con2}
In a zero-sum repeated game where Player~1 is more informed than Player~2 (i.e., Player~1 observes what Player~2 observes), $(v_n)$ and
$(v_{\lambda})$ converge pointwise to the $\maxmin$ of the game,
which is the maximal amount that Player~1 can guarantee to herself in
long games.
\end{con}

The Mertens' conjectures have been proven true in numerous particular
classes of zero-sum repeated games [see
\citet{AM95,BK76,GOV13,MN81,MZ71,N08},
\citeauthor{R06} (\citeyear{R06,R12}),
\citet{rosenberg00},
\citeauthor{RSV02} (\citeyear{RSV02,RSV03,RSV04}),
\citet{RV00},
\citeauthor{sorin841} (\citeyear{sorin841,sorin852})].

In zero-sum repeated games with symmetric information [the particular
class of zero-sum repeated games in which players observe the actions
perfectly and receive a public signal about the state, i.e., for all
$m \in\mathbb{N}^*$, $a_m=b_m$ and $(i_m,j_m)$ is $a_m$-measurable],
Conjectures \ref{con1} and \ref{con2} were not proven [see \citet{F82,KZ74,V10} for a
proof of the conjectures under additional assumptions on the transition
function]. Note that in this class of games, Player~1 has the same
information as Player~2, and in particular she is more informed than
Player~2.

The main contribution of this paper is to provide an example of a
zero-sum repeated game with symmetric information where $(v_{\lambda
})$ does not converge when $\lambda$ goes to $0$, thus contradicting
both conjectures.
Our example also shows that there is no hope of obtaining an existence
result for the asymptotic value in zero-sum repeated games, unless we
make a very strong assumption on the transition function.
Indeed, the structure of the example is very simple: at each stage, the
action of one player only influences their stage payoff and the
transition; moreover, players observe the payoff. In addition, since
the information is symmetric, the belief hierarchy [belief of Player~1
about the state or actions played, belief of Player~2 about the belief
of Player~1 about the state or actions played, and so on; see \citet
{MZ85} for more details] is straightforward: players know the past
actions and have the same belief about the state.

In addition, using the example we will construct a second example of a
zero-sum repeated game with symmetric information where neither
$(v_{\lambda})$ nor $(v_n)$ converge.

Lastly, we will provide an example of a state-blind zero-sum repeated
game (the particular class of zero-sum repeated games with symmetric
information where players get no signals about the state) and an
example of a zero-sum repeated game with one state-blind player (one
player observes the state but the other gets no signal about it)
without an asymptotic value. We will also give an example of a standard
zero-sum stochastic game with compact action sets without an asymptotic
value, providing an alternative counterexample to \citet{vigeral13}.
Note that this last class of zero-sum repeated games does not concern
the two conjectures, since the action sets are not finite. Nonetheless,
the example is interesting because it has a simpler structure than the
one given in \citet{vigeral13} (players play in turn) and is thus
easier to analyze.




The paper is organized as follows. In Section~\ref{sec1}, we explain the model
of zero-sum repeated game with symmetric information and some basic
concepts. In Section~\ref{sec2}, we present our main counterexample and show
that $(v_{\lambda})$ does not converge. In Section~\ref{sec3}, we construct a
similar game where neither $(v_{\lambda})$ nor $(v_n)$ converge.
In Section~\ref{sec4}, we show how our counterexample adapts to other classes of
zero-sum repeated games.

\section{Zero-sum repeated games with symmetric information}\label{sec1} \label{gen}

\subsection{The model}\label{sec1.1} \label{genmodel}

A zero-sum repeated game with symmetric information $\Gamma$ is
defined by the following elements:
\begin{itemize}
\item[--]
State space $K$.
\item[--]
Action set $I$ (resp., $J$) for Player~1 (resp., 2).
\item[--]
Signal set $A$.
\item[--]
Transition function $q\dvtx K \times I \times J \rightarrow\Delta(K \times A)$.
\item[--]
Payoff function $g\dvtx K\times I \times J \rightarrow\mathbb{R}$.\vadjust{\goodbreak}
\end{itemize}
We assume $K$, $I$, $J$ and $A$ to be finite. Both players know $K,  I,  J,  A,  g,  q$.

Given an initial probability $p \in\Delta(K)$ known by both players,
the game $\Gamma^{p}$ proceeds as described below:
\begin{itemize}
\item[--]
Before the game starts, an initial state $k_1$ is drawn according to
$p$. The quantity $k_1$ is the initial state. Players do not know $k_1$.
\item[--]
At stage $m \geq1$, both players choose an action simultaneously and
independently, $i_m \in I$ (resp., $j_m \in J$) for Player~1 (resp.,
2). The payoff at stage $m$ is $g(k_m,i_m,j_m)$. A pair $(k_{m+1},a_m)$
is drawn from $q(k_m,i_m,j_m)$. Both players receive the public signal
$a_m$, which contains the actions $i_m$ and $j_m$. The game moves on to
state $k_{m+1}$, and then continues to the next stage.
\end{itemize}
Compared to the model of general repeated game described in the\break 
\hyperref[sec1int]{Introduction}, when the game has symmetric information, one has
$a_m=b_m$ and $(i_m,j_m)$ is \mbox{$a_m$-}measurable for all $m \geq1$, and
the players do not receive a signal at the outset of the game.
The \textit{history of the game before stage $m$} is the random
sequence $(k_1,i_1,j_1,a_1,k_2,\ldots,i_{m-1},j_{m-1},a_{m-1},k_m)$.

The set of all possible histories before stage $m$ is
$H_m:= K \times(I \times J \times A \times K)^{m-1}$.

The set of all possible plays is
$H_\infty:= K \times(I \times J \times A \times
K)^{\mathbb{N}^*}$.

At the beginning of stage $m$, both players know $(i_1,j_1,a_1,
\ldots,i_{m-1},j_{m-1}, \break a_{m-1})$.

A \textit{pure strategy} for Player~1 (resp., 2) is a map $s\dvtx \cupp_{m
\geq1} (I \times J \times A)^{m-1} \rightarrow I$ [resp., $t\dvtx \cupp_{m
\geq1} (I \times J \times A)^{m-1} \rightarrow J$].

A \textit{behavior strategy} for Player~1 (resp., 2) is a map
$\sigma\dvtx \cupp_{m \geq1} (I \times J \times A)^{m-1} \rightarrow
\Delta(I)$ [resp., $\tau\dvtx \cupp_{m \geq1} (I \times J \times
A)^{m-1} \rightarrow\Delta(J)$]. The set of all behavior strategies
for Player~1 (resp., 2) is denoted $\Sigma$ (resp., $\mathcal{T}$).

An initial probability $p \in\Delta(K)$ and a pair of (pure or
behavior) strategies $(\sigma,\tau) \in\Sigma\times\mathcal{T}$
naturally induce a probability measure $\mathbb{P}^{p}_{\sigma,\tau
}$ on the set of all finite histories $\cupp_{m \geq1} H_m$ [for more
details, see \citet{sorin02b}, Appendix~D]. By the Kolmogorov extension
theorem, this probability measure uniquely extends to $H_{\infty}$.
We denote $\mathbb{E}^{p}_{\sigma,\tau}$ the expectation with
respect to the probability measure $\mathbb{P}^{p}_{\sigma,\tau}$.
Let $g_m$ be the random payoff at stage $m \geq1$: $g_m:=g(k_m,i_m,j_m)$.

For $\lambda\in(0,1]$, the \textit{$\lambda$-discounted game} is
the strategic-form game $\Gamma^{p}_{\lambda}$ with strategy set
$\Sigma$ for Player~1 and $\mathcal{T}$ for Player~2, and payoff
function $\gamma^{p}_{\lambda}\dvtx \Sigma\times\mathcal{T} \rightarrow
\mathbb{R}$ defined by
\[
\gamma^{p}_{\lambda}(\sigma,\tau):=\mathbb{E}^{p}_{\sigma,\tau
}
\biggl(\sum_{m \geq1} \lambda(1-\lambda)^{m-1}
g_m \biggr) \cdot
\]
The goal of Player~1 (resp., 2) is to maximize (resp., minimize)
$\gamma^{p}_{\lambda}$.

For $n \in\mathbb{N}^*$, the \textit{$n$-stage repeated game} is the
strategic-form game $\Gamma^{p}_n$ with strategy set $\Sigma$ for
Player~1 and $\mathcal{T}$ for Player~2, and payoff function $\gamma
^{p}_{n}\dvtx \Sigma\times\mathcal{T} \rightarrow\mathbb{R}$ defined by
\[
\gamma^{p}_n(\sigma,\tau):=\mathbb{E}^{p}_{\sigma,\tau}
\Biggl(\frac{1}{n} \sum_{m=1}^n
g_m \Biggr) \cdot
\]
The\vspace*{1pt} goal of Player~1 (resp., 2) is to maximize (resp., minimize)
$\gamma^{p}_{n}$.

The games $\Gamma^{p}_{\lambda}$ and $\Gamma^{p}_n$ have a value,
denoted, respectively, by
$v_{\lambda}(p)$ and $v_n(p)$ [see \citet{MSZ}, Chapter IV]. That is,
there are real numbers $v_{\lambda}(p)$ and $v_{n}(p)$ satisfying
%
\begin{eqnarray*}
v_{\lambda}(p)&=&\max_{\sigma\in\Sigma} \min_{\tau\in\mathcal
{T}}\gamma^{p}_{\lambda}(\sigma,\tau)=\min_{\tau\in\mathcal
{T}}\max_{\sigma\in\Sigma}\gamma^{p}_{\lambda}(\sigma,\tau),
\\
v_n(p)&=&\max_{\sigma\in\Sigma} \min_{\tau\in\mathcal{T}}
\gamma ^{p}_n(\sigma,\tau)=\min_{\tau\in\mathcal{T}}
\max_{\sigma\in
\Sigma}\gamma^{p}_{n}(\sigma,\tau).
\end{eqnarray*}
In the game $\Gamma^{p}_{\lambda}$, a strategy $\sigma^* \in\Sigma
$ (resp., $\tau^* \in\mathcal{T}$) is \textit{optimal} if for all
$\tau\in\mathcal{T}$ (resp., $\sigma\in\Sigma$) we have $\gamma
_{\lambda}(\sigma^*,\tau) \geq v_{\lambda}(p)$ [resp., $\gamma
_{\lambda}(\sigma,\tau^*) \leq v_{\lambda}(p)$]. Optimal strategies
in $\Gamma^{p}_n$ are defined in the same way, replacing $\lambda$ by $n$.

\subsection{Asymptotic approach}\label{sec1.2}

\begin{definition}
$\Gamma$ has an \textit{asymptotic value} if the sequences of
functions $(v_n)$ and $(v_{\lambda})$ converge pointwise to the same
limit (when $n \rightarrow+\infty$ and $\lambda\rightarrow0$).
\end{definition}

%
\begin{remark}
For all $(n,\lambda) \in\mathbb{N}^* \times(0,1]$, $v_n\dvtx \Delta(K)
\rightarrow\mathbb{R}$ and
$v_{\lambda}\dvtx \Delta(K) \rightarrow\mathbb{R}$ are $\llVert  g\rrVert  _{\infty}$-Lipschitz [see
\citet{MSZ}, Chapter V, page~184]. Thus, as
far as these sequences are concerned, pointwise and uniform convergence
are equivalent. In what follows,
we will simply write $``(v_n)$ converges'' or $``(v_{\lambda})$
converges,'' whenever these sequences of functions converge pointwise.
\end{remark}

\subsection{Equivalent repeated game with perfect observation of the state}\label{sec1.3} \label{repu}

Let $\Gamma$ be a repeated game with symmetric information. For $m
\geq1$, we denote $p_m$ the conditional probability on the state $k_m$
at stage $m$, given the random past history of the players
$(i_1,j_1,a_1,\ldots,i_{m-1},j_{m-1},a_{m-1})$. 
The random variable $p_m$ represents the common belief at stage $m$
about the current state $k_m$. The triplet $(p_{m+1},i_m,j_m)$ is the
only relevant information conveyed by the signal $a_m$, and $p_m$ plays
the role of a state variable. Indeed, extend $q$ and $g$ linearly to
$\Delta(K) \times I \times J$:
$q(p,i,j):=\sum_{k \in K} p(k) q(k,i,j)$ and $g(p,i,j):=\sum_{k \in
K} p(k) g(k,i,j)$.

Assume that at some stage of the game, players have a common belief $p$
about the current state. If they play $(i,j)$ and receive the signal $a
\in A$, then their posterior belief about the next state will be
$q_{\Delta(K)\mid A}(p,i,j,a) \in\Delta(K)$, where
\[
\forall k \in K \qquad q_{\Delta(K)\mid A}(p,i,j,a) (k):= \frac{q(p,i,j)(k,a)}{\sum_{k' \in K} q(p,i,j)(k',a)}.
\]
Let $q_{A}(p,i,j) \in\Delta(A)$ be the marginal on $A$ of $q(p,i,j)$.
We define $\widetilde{q}\dvtx \Delta(K) \times I \times J \rightarrow
\Delta_f(\Delta(K))$ by
%
%
\begin{equation}
\widetilde{q}(p,i,j):=\sum_{a \in A}
q_A(p,i,j) (a) \cdot q_{\Delta
(K)\mid A}(p,i,j,a).
\end{equation}
%
If players have a common belief $p$ about the current state and play
$(i,j)$, then for all $a \in A$, their posterior belief about the next
state will be $q_{\Delta(K)\mid A}(p,i,j,a)$ with probability $q_A(p,i,j)(a)$.

Fix $\lambda\in(0,1]$. The function $v_{\lambda}\dvtx  \Delta(K)
\rightarrow\mathbb{R}$ is the unique solution of the following
functional equation [see \citet{MSZ}, Chapter IV, Theorem~3.2, page~158]:
%
%
\begin{eqnarray}
\label{shapley} f(p)&=&\max_{x \in\Delta(I)} \min_{y \in\Delta(J)}
\bigl\{ \lambda{g}(p,x,y)+(1-\lambda)\mathbb{E}^p_{x,y}(f)
\bigr\}
\\
\label{shapley2} &=&\min_{y \in\Delta(J)} \max_{x \in\Delta(I)}
\bigl\{\lambda {g}(p,x,y)+(1-\lambda)\mathbb{E}^p_{x,y}(f)
\bigr\},
\end{eqnarray}
where the unknown is a continuous function $f\dvtx \Delta(K) \rightarrow
\mathbb{R}$,
\[
\mathbb{E}^p_{x,y}(f):=\sum_{(p',i,j) \in\Delta(K) \times I \times
J}
x(i)y(j)\widetilde{q}(p,i,j) \bigl(p'\bigr)f\bigl(p'
\bigr)
\]
and
\[
g(p,x,y):=\sum_{(i,j) \in I \times J} x(i)y(j){g}(p,i,j).
\]
The game that is equivalent to $\Gamma$
is the repeated game $\widetilde{\Gamma}$ with state space $\Delta
(K)$, action sets $I$ and $J$, transition function $\widetilde
{q}\dvtx \Delta(K) \times I \times J \rightarrow\Delta_f (\Delta
(K) )$ 
and payoff function $g$.

Given an initial state $p \in\Delta(K)$, the game $\widetilde{\Gamma
}{}^{p}$ proceeds as follows. Players know $p$, and at each stage $m \geq
1$, they choose an action simultaneously and independently, $i_m \in I$
(resp., $j_m \in J$) for Player~1 (resp., 2). The payoff at stage $m$
is $g(p_m,i_m,j_m)$, and $p_{m+1}$ is drawn from $\widetilde
{q}(p_m,i_m,j_m)$, and announced to both players. The game moves on to
state $p_{m+1}$, and then continues to the next stage. Note that $p_m$
can only take a countable number of values.

Given $\lambda\in(0,1]$ and $n \in\mathbb{N}^*$, the $\lambda
$-discounted game $\widetilde{\Gamma}{}^{p}_{\lambda}$ and the
$n$-stage repeated game $\widetilde{\Gamma}{}^{p}_n$ are defined as in
Section~\ref{genmodel}.
The games $\widetilde{\Gamma}{}^{p}_{\lambda}$ (resp., $\widetilde
{\Gamma}{}^{p}_n$) and $\Gamma^{p}_{\lambda}$ (resp., $\Gamma^{p}_n$)
have the same value, and optimal strategies in the first game induce
optimal strategies in the second one, and vice versa.

\begin{definition}
A strategy in $\widetilde{\Gamma}$ is \textit{stationary} if it only
depends on the state variable $p_m$. Such a strategy can be seen as a
map from $\Delta(K)$ to $\Delta(I)$ or $\Delta(J)$.
\end{definition}

There exists stationary strategies which are optimal in
$\widetilde{\Gamma}{}^{p}_{\lambda}$ for any $p \in\Delta(K)$ [see
\citet{MSZ}, Chapter VII, Proposition 1.4, page~326]. We have the
following refinement, which follows from \citet{SH53} and the
compactness of $\Delta(K)$:

\begin{definition}
A player is said to \textit{control} $p \in\Delta(K)$ if in this
state the transition $\widetilde{q}(p,\cdot)$ and the payoff ${g}(p,\cdot)$ do
not depend on the action of the other player.
\end{definition}

\begin{lemma} \label{pure}
Assume that each state in $\Delta(K)$ is controlled by one player.
Then both players have pure stationary strategies which are optimal in
$\widetilde{\Gamma}{}^p_{\lambda}$ for any $p \in\Delta(K)$.
\end{lemma}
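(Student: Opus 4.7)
The plan is to combine Shapley's existence theorem for optimal stationary strategies in discounted stochastic games with the observation that, at a state controlled by a single player, the one-stage best-response problem is linear (hence has its extremum attained at a vertex of the simplex).

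First I would invoke the cited result of Shapley, extended to the compact state space $\Delta(K)$ by the Banach fixed-point argument applied to the Shapley operator on the Banach space of continuous functions on $\Delta(K)$, to obtain that $v_\lambda:\Delta(K)\to \m{R}$ is continuous (indeed $\|g\|_\infty$-Lipschitz by the previous remark) and satisfies (\ref{shapley})–(\ref{shapley2}) pointwise. Now fix $p\in\Delta(K)$ and assume $p$ is controlled by Player 1 (the other case is symmetric). By definition of control, $g(p,i,j)$ and $\widetilde{q}(p,i,j)$ do not depend on $j$, so (\ref{shapley}) collapses to
\begin{equation*}
v_\lambda(p)=\max_{x\in\Delta(I)}\bigl\{\lambda g(p,x)+(1-\lambda)\m{E}^p_{x}(v_\lambda)\bigr\}.
\end{equation*}
The map $x\mapsto \lambda g(p,x)+(1-\lambda)\m{E}^p_{x}(v_\lambda)$ is affine on $\Delta(I)$, hence its maximum is attained at an extreme point, i.e.\ at a pure action in $I$.

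Second, I would construct the strategies by measurable selection. Fix arbitrary orderings of $I$ and $J$ and arbitrary elements $i_0\in I$, $j_0\in J$. For $p$ controlled by Player 1 define $\sigma^*(p)$ to be the smallest $i\in I$ attaining the maximum above, and $\tau^*(p):=j_0$; for $p$ controlled by Player 2 define symmetrically $\tau^*(p)$ as the smallest minimizer and $\sigma^*(p):=i_0$. Since $v_\lambda$ is continuous and $I,J$ are finite, the sets on which the selectors take each value are Borel (intersections and differences of closed sets), so $\sigma^*$ and $\tau^*$ are genuine pure stationary strategies.

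Third, I would verify optimality. By construction, at every $p\in\Delta(K)$ the profile $(\sigma^*(p),\tau^*(p))$ realises the max–min in (\ref{shapley}); equivalently, $v_\lambda$ solves the one-stage Bellman equation under $(\sigma^*,\tau^*)$. The standard one-shot-deviation argument, valid because the Shapley operator is a $(1-\lambda)$-contraction on the space of continuous functions, then yields
\begin{equation*}
\gamma^p_\lambda(\sigma^*,\tau)\ \ge\ v_\lambda(p)\ \ge\ \gamma^p_\lambda(\sigma,\tau^*)\qquad\forall(\sigma,\tau)\in\Sigma\times\mathcal{T},
\end{equation*}
which is exactly optimality of $(\sigma^*,\tau^*)$ in $\widetilde{\Gamma}^p_\lambda$. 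The main obstacle is the production of measurable pure selectors on the potentially uncountable state space $\Delta(K)$; the control hypothesis is precisely what allows this, since it eliminates the dependence of each player's best stage-action on the opponent and reduces each one-stage problem to a finite affine optimisation on a simplex, whose optimum lies at a vertex.
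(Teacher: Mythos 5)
Your proposal is correct and follows essentially the route the paper itself indicates (the paper only cites Shapley's contraction argument and the compactness of $\Delta(K)$): the value solves the Shapley equation, at a controlled state the one-stage problem is affine in the controller's mixed action and hence maximized at a pure action, and the standard verification/one-shot-deviation argument yields optimality of the resulting pure stationary profile. The measurable-selection step you worry about is not actually needed here, since strategies are defined on (countably many) finite histories and from any fixed initial belief only countably many states of $\widetilde{\Gamma}$ are reachable, so any pointwise selection of maximizers defines a legitimate pure stationary strategy.
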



\section{A repeated game with symmetric information where \texorpdfstring{$(v_{\lambda})$}{(v{lambda})} does not converge}\label{sec2} \label{counter}
First we present the main counterexample of the paper. We then describe
the equivalent game with perfect observation of the state and actions.
This game might seem intricate, but it turns out that for each discount
factor $\lambda$ in $(0,1]$, the discounted game is equivalent to a
strategic-form game with strategy sets $\mathbb{N}$ for Player~1 and
$2 \mathbb{N}$ for Player~2. From the analysis of this last game, we
deduce that the discounted value of the main counterexample does not converge.

\subsection{Description of the example}\label{sec2.1} \label{describe}
Consider the following repeated game with symmetric information $\Gamma
$, with state space
$K= \{1^*,1^{++},1^T,1^{+},0^*,\break 0^{++},0^+  \}$, action sets
$I=J= \{ C,Q  \}$, and signal set $A= \{D,D'  \}
$. 
The payoff function does not depend on the actions, and is equal to 1
in states $1^*$, $1^{++}$, $1^T$ and $1^+$, and to 0 in states $0^*$,
$0^{++}$ and $0^+$.
Player~2 controls states $1^{++}$, $1^T$ and $1^+$.
Player~1 controls states $0^{++}$ and $0^+$.
Lastly, states $1^*$ and $0^*$ are \textit{absorbing} states: once
$1^*$ or $0^*$ is reached, the game remains forever in this state, and
the payoff does not depend on the actions (\textit{absorbing} payoff).
Figure~\ref{fig1} describes the transition function.

We have adopted the following notation: an arrow going from state $k
\in K$ to state $k' \in K$ with the caption $(i,p,a) \in \{C,Q
 \} \times[0,1] \times \{D,D'  \}$ indicates that if
the player who controls state $k$ plays action $i$, then with
probability $p$ the state moves to state $k'$ and the signal is $a$.
For example, if the state is $1^{++}$ and Player~2 plays action $C$,
then with probability $1/2$ the game moves to state $1^T$ and the
signal is $D$, and with probability $1/2$ the game stays in state
$1^{++}$ and the signal is $D'$.

%
\begin{figure}[t]

\includegraphics{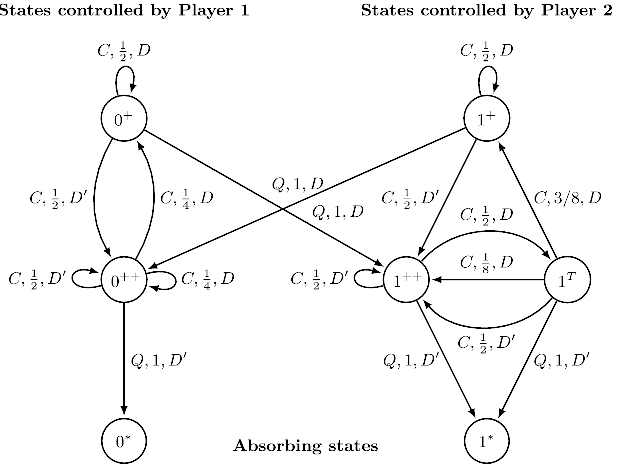}

\caption{Transitions in the game $\Gamma$.}\label{fig1}
\end{figure}

The action $Q$ causes absorption or switching from $ \{
1^{++},1^{T},1^+  \}$ to
$ \{0^{++},\break 0^+  \}$ and vice versa. In particular, the
players know in which of the following subsets of $K$ the current state
is: $ \{1^{++},1^T,1^+ \}$, $ \{0^{++},0^+  \}$,
$ \{1^*  \}$ or $ \{0^*  \}$.

\subsection{Equivalent repeated game with perfect observation}\label{sec2.2} \label{red}
In this subsection, we give the exact expression of the transition
function $\widetilde{q}$ of the\vspace*{1pt} equivalent repeated game with perfect
observation of the state and actions $\widetilde{\Gamma}$. We denote
by $1_{2n}$, $1_{2n+1}$, $0_n \in\Delta(K)$ ($n \in\mathbb{N}^*$)
the possible beliefs of the players along the game. Starting from the
prior belief $0^{++}$, $0_n$ is the belief after $n$ consecutive stages
in which Player~1 played $C$ and the signal was $D$; starting from the
prior belief $1^{++}$, $1_{n}$ is the belief after $n$ consecutive
stages in which Player~2 played $C$ and the signal was~$D$.

Formally, given $n \in\mathbb{N}^*$, we define the beliefs
$1_{2n},1_{2n+1},0_n \in\Delta(K)$ by:
\begin{eqnarray*}
1_{2n}&:=&2^{-2n} \cdot1^{++}+\bigl(1-2^{-2n}
\bigr) \cdot1^+,
\\
1_{2n+1}&:=&2^{-2n} \cdot1^{T}+\bigl(1-2^{-2n}
\bigr) \cdot1^+,
\\
0_n&:=&2^{-n} \cdot0^{++}+\bigl(1-2^{-n}
\bigr)\cdot0^+.
\end{eqnarray*}
Let us suppose that at some stage of $\Gamma$, the belief of the
players about the current state is $1_{n}$, for some $n \in\mathbb
{N}$. Player~1's action has no influence on the transition and both
players know it. If Player~2 plays $C$, then with probability $1/2$ the
signal is~$D$ (resp., $D'$), and by Bayes rule the posterior belief
about the next state will be $1_{n+1}$ (resp., $1_0=1^{++}$). Now\vspace*{1pt} let
us suppose that the belief is $1_{2n}$ or $1_{2n+1}$. If he plays $Q$,
then with probability $1-2^{-2n}$ (resp., $2^{-2n}$) the signal is $D$
(resp., $D'$), and the posterior belief about the next state will be
$0^{++}$ (resp., $1^*$). Thus, in $\widetilde{\Gamma}$, the
transition function $\widetilde{q}$ in the states $1_{2n}$ and
$1_{2n+1}$ can be described by Figure~\ref{fig2}.
%
\begin{figure}[t]

\includegraphics{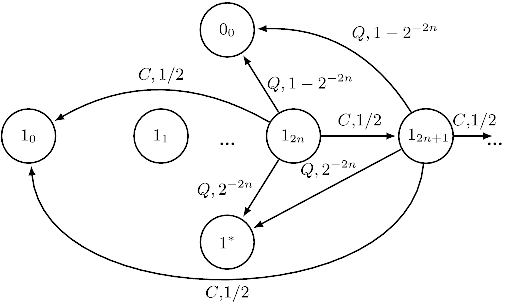}

\caption{Transitions in the states $1_{2n}$ and $1_{2n+1}$.}
\label{fig2}
%
%
%
%
%
\end{figure}



Let us suppose that at some stage of $\Gamma$, the belief of the
players about the current state is $0_{n}$. Player~2's action has no
influence on the transition and both players know it. If Player~1 plays
$C$, then with probability $1/2$ the signal is $D$ (resp., $D'$), and
by Bayes rule the posterior belief about the next state will be
$0_{n+1}$ (resp., $0_0=0^{++}$). Now let us suppose that the belief is
$0_n$. If she plays $Q$, then with probability $1-2^{-n}$ (resp.,
$2^{-n}$) the signal is $D$ (resp., $D'$), and the posterior belief
about the next state will be $1^{++}$ (resp., $0^*$). Thus, in
$\widetilde{\Gamma}$, the transition function $\widetilde{q}$ in the
state $0_{n}$ can be described by Figure~\ref{fig3}.
%
\begin{figure}

\includegraphics{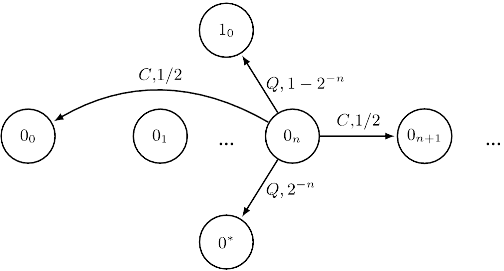}

\caption{Transitions in the state $0_n$.}
\label{fig3}
%
\end{figure}

Let $P_1:=\cupp_{n \in\mathbb{N}}  \{1_{n}
\}$, $P_0:=\cupp_{n \in\mathbb{N}}  \{0_{n}
\}$, and $P=P_1 \cup P_0 \cup \{1^*,0^* \}$.
Note that in $\widetilde{\Gamma}$, Player~1 controls all the states
in $P_0$, and Player~2 controls all the states in~$P_1$. The set of
states which can be reached with positive probability under some
strategy vector in $\widetilde{\Gamma}{}^{1^{++}}$ is exactly $P$.

Now let us explain the dynamics of the game informally. Assume that the
game starts in state $p=0_0=0^{++}$. Since the payoff is 1 in states
lying in $P_1$ and $0$ in states lying in $P_0$, and since Player~1
maximizes the payoff, Player~1 wants to go to state $1_0=1^{++}$. If
she plays $Q$ immediately, the game is absorbed in state $0^*$, which
is the worst state for her. If she never plays $Q$, the payoff is 0
forever, which is also an unfavorable outcome for her. If she plays $C$
until the state is $0_{n}$, and then she plays $Q$, then the game is
absorbed in state $0^*$ with probability $2^{-n}$ (we will often call
\textit{absorbing risk taken by Player}~1 the probability that the
game is absorbed in state $0^*$), and the game goes to state $1^{++}$
with a probability of $(1-2^{-n})$.

To reach state $0_{n}$ from state $0^{++}$, Player~1 needs $2^{n}$
stages on average. Hence, Player~1 has to make a trade-off between
staying not too long in states of type 0, and having a low probability
of being absorbed in $0^*$ when she plays $Q$. Basically, Player~1
needs to wait on average $2^n$ stages to reduce the absorbing risk to $2^{-n}$.

The same principle applies to Player~2. Assume that the game starts in
state $p=1_0=1^{++}$. Player~2 plays $C$ until reaching state $1_{2n}$,
and then $Q$. The game is absorbed in state $1^*$ with a probability of
$2^{-2n}$, and it goes to state state $0^{++}$ with a probability of
$(1-2^{-2n})$.

To reach state $1_{2n}$, Player~2 needs on average $2^{2n}$ stages.
Player~2 can also play $Q$ in state $1_{2n+1}$, but this is not a good
strategy, since such a state is harder to reach than state $1_{2n}$
($2^{2n+1}$ stages on average) but leads to the same absorbing risk
$2^{-2n}$. Note that the time needed by Player~2 to go from state
$1^{++}$ to state $1_{2n}$ is on average the same as the time needed by
Player~1 to go from state $0^{++}$ to state~$0_{2n}$.
The state of the game oscillates between states of type 0 and states of
type 1 as long as it does not reach an absorbing state. The only
asymmetry of the game is that Player~1 can take any absorbing risks of
the form $2^{-n}$, whereas Player~2 can only take absorbing risks of
the form $2^{-2n}$.

\subsection{Equivalent strategic-form game}\label{sec2.3} \label{oneshot}
Fix $\lambda\in(0,1]$.
Let $(a,b) \in\mathbb{N} \times2 \mathbb{N}$.
Let $s(a) \in\Sigma$ be the following pure stationary strategy for
Player~1: for every $n \geq a$, play $Q$ in state $0_n$; otherwise play $C$.

Let $t(b) \in\mathcal{T}$ be the following pure stationary strategy
for Player~2: for every $n \geq b$, play $Q$ in state $1_n$; otherwise
play $C$.

The aim of this section is to prove the following proposition.

\begin{proposition} \label{eq2}
The game $\widetilde{\Gamma}_{\lambda}^{p}$ for $p=1^{++}$ has the
same value as the strategic-form game $G_{\lambda}$ with action set
$\mathbb{N}$ for Player~1, $2 \mathbb{N}$ for Player~2, and payoff function
%
%
\begin{equation}
\label{discpayoff} g_{\lambda}(a,b):=\frac{1-f_{\lambda}(b)}{1-f_{\lambda
}(a)f_{\lambda}(b)},
\end{equation}
where
\[
f_{\lambda}(n):=\frac{(1-2^{-n})(1-\lambda^2)}{1+2^{n+1}\lambda
(1-\lambda)^{-n}-\lambda}.
\]
Moreover, $(a,b) \in\mathbb{N} \times2 \mathbb{N}$ are optimal
strategies in $G_{\lambda}$ if and only if $s(a)$ and $t(b)$ are
optimal strategies in $\widetilde{\Gamma}_{\lambda}^{1^{++}}$.
\end{proposition}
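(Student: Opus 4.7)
The plan is: (i) invoke Lemma~\ref{pure} to pass to pure stationary strategies, (ii) observe that on trajectories reachable from $1^{++}$ any such strategy is a threshold strategy $s(a)$ or $t(b)$, (iii) compute $\gamma^{1^{++}}_\lambda(s(a),t(b))$ by solving the random-walk recursion inside each of the type-$1$ and type-$0$ phases, and (iv) rule out odd thresholds for Player~2 by a monotonicity argument.

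For (i)-(ii), Lemma~\ref{pure} applies because every state of $\widetilde{\Gamma}$ is controlled by one player: Player~1 controls $P_0$, Player~2 controls $P_1$. A pure stationary strategy $\sigma$ for Player~1 is a map $P_0\to\{C,Q\}$; writing $a:=\min\{n\in\m{N}\mid \sigma(0_n)=Q\}\in\m{N}\cup\{+\infty\}$, the monotone advance-or-restart dynamics in $P_0$ ensures that $0_m$ with $m>a$ is never reached from $0_0$, so $\sigma$ induces the same distribution on plays as $s(a)$. The analogous reduction for Player~2 produces $b\in\m{N}\cup\{+\infty\}$, and the degenerate $+\infty$ thresholds are readily seen to be (weakly) dominated by finite ones.

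For (iii), set $U_1:=\gamma^{1_0}_\lambda(s(a),t(b))$ and $U_0:=\gamma^{0_0}_\lambda(s(a),t(b))$. Under $t(b)$, the process in the type-$1$ phase is, until $Q$ is played, the Markov chain on $\{1_0,\dots,1_b\}$ that advances one step or resets to $1_0$ with probability $1/2$ each, with per-stage payoff $1$. Let $T$ be the first stage $Q$ is played and set $\psi_n:=\m{E}_{k_1=1_n}\bigl[(1-\lambda)^T\bigr]$. Then
\begin{equation*}
\psi_n \;=\; \tfrac{1-\lambda}{2}(\psi_{n+1}+\psi_0) \quad (0 \le n < b),\qquad \psi_b \;=\; 1-\lambda,
\end{equation*}
which is solved by the ansatz $\psi_n = A + C\bigl(2/(1-\lambda)\bigr)^n$; the two boundary conditions yield
\begin{equation*}
\psi_0(b) \;=\; \frac{1-\lambda^2}{1-\lambda + 2^{b+1}\lambda (1-\lambda)^{-b}}.
\end{equation*}
Decomposing the discounted sum into the type-$1$ excursion (contributing $1-(1-\lambda)^T$) and the post-$Q$ continuation (probability $p_b$ of absorbing in $1^*$ at value $1$, probability $1-p_b$ of entering the type-$0$ phase at value $U_0$) gives $U_1 = 1-\psi_0(b)(1-p_b)(1-U_0)$. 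The description of the transitions in Subsection~\ref{red} yields $p_{2n}=p_{2n+1}=2^{-2n}$, so that $(1-p_b)\psi_0(b)=f_\lambda(b)$ precisely when $b$ is even. The same computation in the type-$0$ phase, where the absorbing probability at $0_a$ equals $2^{-a}$ for every $a$, yields $U_0 = (1-2^{-a})\psi_0(a)U_1 = f_\lambda(a)U_1$. Substituting and solving for $U_1$ when $b\in 2\m{N}$ gives $U_1 = (1-f_\lambda(b))/(1-f_\lambda(a)f_\lambda(b)) = g_\lambda(a,b)$.

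For (iv), when $b=2n+1$ the same derivation produces $U_1 = (1-\widetilde{f}_\lambda(b))/(1-f_\lambda(a)\widetilde{f}_\lambda(b))$ with $\widetilde{f}_\lambda(2n+1):=(1-2^{-2n})\psi_0(2n+1)$. Since the denominator of $\psi_0$ is strictly increasing in $b$, we have $\psi_0(2n+1)<\psi_0(2n)$, hence $\widetilde{f}_\lambda(2n+1)<f_\lambda(2n)$. The map $h\mapsto(1-h)/(1-f_\lambda(a)h)$ has derivative $(f_\lambda(a)-1)/(1-f_\lambda(a)h)^2<0$, so Player~2, who minimises, strictly prefers $t(2n)$ to $t(2n+1)$ against every $s(a)$; consequently her optimal threshold must lie in $2\m{N}$. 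Combining this with the reductions of (i)-(ii) yields $\val(\widetilde{\Gamma}^{1^{++}}_\lambda)=\val(G_\lambda)$ together with the claimed correspondence of optimal strategies. The main obstacle is the closed-form resolution of the $\psi_0$-recurrence; once that is in hand the rest is elementary algebra, and the parity restriction on $b$ is an immediate consequence of the intrinsic asymmetry $p_{2n}=p_{2n+1}$ baked into the transition rules of $\widetilde{\Gamma}$.
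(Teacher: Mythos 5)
Your proposal is correct and follows essentially the same route as the paper: reduce to pure stationary, hence threshold, strategies via Lemma~\ref{pure}, eliminate odd thresholds for Player~2 by the dominance comparison $\widetilde{f}_\lambda(2n+1)\le f_\lambda(2n)$, and compute $\gamma^{1^{++}}_\lambda(s(a),t(b))$ through a renewal decomposition of the two phases. The only difference is cosmetic: you solve the first-passage recursion for $\psi_0$ directly (and use the two-equation system in $U_0,U_1$) where the paper factors $\m{E}\bigl((1-\lambda)^{T_a+T_b}\bigr)$ by independence and quotes the generating function of the generalized geometric distribution of order $n$; your closed form reproduces the stated $f_\lambda$ exactly.
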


\begin{pf}
First note that by Lemma \ref{pure}, there exists pure optimal
stationary strategies in $\widetilde{\Gamma}{}^{1^{++}}_{\lambda}$. A
pure stationary strategy for Player~1 corresponds to a strategy $s(a)$,
where $a \in\mathbb{N}$ is the smallest integer for which Player~1
plays $Q$ in state $1_a$. Note that Player~2 is better off quitting in
state $1_{2n}$ rather than in state $1_{2n+1}$. Indeed, state
$1_{2n+1}$ is harder to reach than state $1_{2n}$, but the probability
of being absorbed in state $1^*$ when playing $Q$ is the same in both
states. Thus, Player~2 has an optimal strategy of the form $t(b)$, for
some $b \in2 \mathbb{N}$. Fix now $a,b \in\mathbb{N} \times2
\mathbb{N}$, and let us compute the payoff $\gamma^{1^{++}}_{\lambda
}(s(a),t(b))$ given by these strategies.


Let $T_b=\inf \{m \geq1\mid j_m=Q \}$ and $T_a=\inf \{m
\geq T_b+1\mid i_m=Q \}-T_b$.
The random variable $T_b$ (resp., $T_a$) represents the time spent by
Player~2 (resp., Player~1) in states of type $1$ (resp., $0$) before quitting.
The payoff $\gamma_\lambda^{1^{++}}(s(a),\break t(b))$ is equal to
%
\begin{eqnarray*}
&& 2^{-b}+\bigl(1-2^{-b}\bigr)
\mathbb{E} \Biggl(\sum_{m=1}^{T_b} \lambda
(1-\lambda)^{m-1}1 + \sum_{m=T_b+1}^{T_a+T_b}
\lambda(1-\lambda)^{m-1}0 \Biggr)
\\
&&\qquad{} +\bigl(1-2^{-b}\bigr) \bigl(1-2^{-a}\bigr) \bigl((1-
\lambda)^{T_a+T_b} \bigr) \gamma _{\lambda}^{1^{++}}
\bigl(s(a),t(b)\bigr),
\end{eqnarray*}
where $\mathbb{E}^{1^{++}}_{s(a),t(b)}$ is denoted by $\mathbb{E}$.
The quantity $2^{-b}$ corresponds to the probability that the game is
absorbed in state $1^*$ when Player~2 plays $Q$: in this case the
payoff is $1$ at every stage. If the game is not absorbed at that
point, then the payoff from stage 1 until the stage when Player~1 plays
$Q$ is the second term of the equation. When Player~1 plays $Q$, the
game is absorbed in state $0^*$ with probability $2^{-a}$, and goes
back to state $1^{++}$ with probability $(1-2^{-a})$; this is the third
term of the equation.

We deduce that
%
%
\begin{equation}
\label{step1} \gamma_{\lambda}^{1^{++}}\bigl(s(a),t(b)\bigr)=
\frac{1-(1-2^{-b})\mathbb
{E} ((1-\lambda)^{T_b}  )}{1-(1-2^{-a})(1-2^{-b})\mathbb
{E} ((1-\lambda)^{T_a+T_b} )}.
\end{equation}
Under $s(a)$ and $t(b)$, the stopping times $T_a$ and $T_b$ are
independent. Thus, we have
%
%
\begin{equation}
\label{step2} \mathbb{E} \bigl((1-\lambda)^{T_a+T_b} \bigr)=\mathbb{E}
\bigl((1-\lambda)^{T_a} \bigr) \mathbb{E} \bigl((1-\lambda)^{T_b}
\bigr) \cdot
\end{equation}
%
Under $s(a)$ [resp., $t(b)$], $T_a-1$ (resp., $T_b-1$) is a random
variable denoting the number of trials needed to have $a$ (resp., $b$)
consecutive successes in independent trials with a success probability
of $1/2$. Thus, this random variable follows the generalized geometric
distribution of order $a$ (resp., $b$) and parameter $1/2$ studied in
\citet{PGP83}. By Lemma~2.2 in \citet{PGP83}, we get
%
%
\begin{equation}
\label{step3} \mathbb{E} \bigl((1-\lambda)^{T_n} \bigr)=(1+\lambda)/
\bigl(1+2^{n+1} \lambda(1-\lambda)^{-n}-\lambda\bigr).
\end{equation}
Combining (\ref{step1}), (\ref{step2}) and (\ref{step3}), we get the
desired result.
\end{pf}
%
\subsection{Asymptotic study of \texorpdfstring{$G_{\lambda}$}{G{lambda}} and proof of the main theorem}\label{sec2.4} \label{limit}
We first determine optimal strategies in $G_{\lambda}$.

\begin{proposition} \label{stratopt}
Let $(a^*,b^*) \in\argmax_{n \in\mathbb{N}}
f_{\lambda} \times\argmax_{n \in2\mathbb{N}} f_{\lambda}$. Then
$a^*$ (resp., $b^*$) is a dominant strategy for Player~1 (resp., 2) in
$G_{\lambda}$. In particular, they are optimal strategies in
$\widetilde{\Gamma}{}^{1^{++}}_{\lambda}$.
\end{proposition}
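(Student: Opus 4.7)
The plan is to reduce Proposition \ref{stratopt} to an elementary monotonicity statement about the two-variable auxiliary function $\phi(u,v):=(1-v)/(1-uv)$. By the explicit formula \eqref{discpayoff}, we have the factorization $g_\lambda(a,b)=\phi(f_\lambda(a),f_\lambda(b))$, so if I can show that $\phi$ is nondecreasing in its first argument and nonincreasing in its second on the range of $f_\lambda$, then maximizing $f_\lambda$ over Player 1's action set $\m{N}$ and over Player 2's action set $2\m{N}$ separately will yield dominant strategies for the two players.

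First I would verify that $f_\lambda$ maps $\m{N}$ into $[0,1)$ for every $\lambda\in(0,1]$. Nonnegativity is immediate: the numerator $(1-2^{-n})(1-\lambda^2)$ is nonnegative, and the denominator $1-\lambda+2^{n+1}\lambda(1-\lambda)^{-n}$ is strictly positive (both summands are nonnegative and the second is strictly positive for $\lambda>0$). For the upper bound, the chain
\begin{equation*}
(1-2^{-n})(1-\lambda^2)\leq (1-\lambda)(1+\lambda)=(1-\lambda)+\lambda(1-\lambda)<(1-\lambda)+2^{n+1}\lambda(1-\lambda)^{-n}
\end{equation*}
reduces to the trivial inequality $(1-\lambda)^{n+1}<2^{n+1}$. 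I would also note that the denominator of $f_\lambda(n)$ grows like $(2/(1-\lambda))^n$ while the numerator stays bounded, so $f_\lambda(n)\to 0$ as $n\to\infty$; combined with $f_\lambda\geq 0$, this ensures that both $\argmax_{n\in\m{N}}f_\lambda$ and $\argmax_{n\in 2\m{N}}f_\lambda$ are nonempty, so that $a^*$ and $b^*$ exist.

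Next, a direct differentiation of $\phi$ on the open square $[0,1)^2$ gives
\begin{equation*}
\frac{\partial \phi}{\partial u}(u,v)=\frac{v(1-v)}{(1-uv)^2}\geq 0,\qquad \frac{\partial \phi}{\partial v}(u,v)=\frac{u-1}{(1-uv)^2}\leq 0 .
\end{equation*}
Hence $\phi$ is nondecreasing in $u$ and nonincreasing in $v$. Since $f_\lambda(a^*)\geq f_\lambda(a)$ for every $a\in\m{N}$ and $f_\lambda(b^*)\geq f_\lambda(b)$ for every $b\in 2\m{N}$, I obtain for all such $a,b$
\begin{equation*}
g_\lambda(a^*,b)=\phi(f_\lambda(a^*),f_\lambda(b))\geq \phi(f_\lambda(a),f_\lambda(b))=g_\lambda(a,b),
\end{equation*}
and symmetrically $g_\lambda(a,b^*)\leq g_\lambda(a,b)$. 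Because Player 1 maximizes and Player 2 minimizes in $G_\lambda$, this shows that $a^*$ and $b^*$ are (weakly) dominant, and therefore optimal, strategies in $G_\lambda$. Proposition \ref{eq2} then transfers this at once to optimality of $s(a^*)$ and $t(b^*)$ in $\widetilde{\Gamma}^{1^{++}}_\lambda$.

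The only genuinely delicate point is the sign bookkeeping for the range of $f_\lambda$ — once $f_\lambda(a),f_\lambda(b)\in[0,1)$ is secured, the remainder is automatic, and the proof works because the particular algebraic form of $\phi$ decouples the two players' optimizations: each separately maximizes $f_\lambda$ over her own feasible set, independently of what the opponent does.
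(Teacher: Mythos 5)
Your proposal is correct and follows essentially the same route as the paper: the paper also notes that $(x,y)\mapsto(1-y)(1-xy)^{-1}$ is increasing in $x$ and decreasing in $y$ on $[0,1)^2$, that $f_\lambda(\m{N})\subset[0,1)$ and $f_\lambda(n)\to 0$ (so the argmaxes are attained), and then concludes dominance and transfers optimality to $\widetilde{\Gamma}^{1^{++}}_{\lambda}$ via Proposition \ref{eq2}. You merely spell out the derivative computation and the range verification that the paper leaves implicit.
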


\begin{pf}
We have $ \lim_{n \rightarrow+\infty} f_{\lambda
}(n)=0$, therefore, $a^*$ and $b^*$ are well defined.
Observe that the function $ (x,y) \rightarrow
(1-y)(1-xy)^{-1}$, defined on $[0,1)^2$, is increasing in $x$ for every
fixed $y$, and decreasing in $y$ for every fixed $x$, and that
$f_\lambda(\mathbb{N}) \subset[0,1)$. Hence, $a^*$ and $b^*$ are
dominant strategies in $G_{\lambda}$, and by Proposition \ref{eq2}
they are optimal strategies in $\widetilde{\Gamma}{}^{1^{++}}_{\lambda}$.
\end{pf}
%

\begin{remark}
The existence of dominant strategies in $G_{\lambda}$ can be explained
without any computation. Indeed, in $\widetilde{\Gamma
}{}^{1^{++}}_{\lambda}$, the maximization problem faced by Player~2 does
not depend on the payoff he receives once reaching state $1_0$, as long
as this payoff is positive. Therefore, whichever stationary strategy
Player~1 chooses, the best-response for Player~2 is always the same.
The same argument applies to Player~1.
\end{remark}

To study $f_{\lambda}$, it is convenient to make the change of
variables $r=2^{-n}$.
Let $\widehat{f}_{\lambda}\dvtx [0,1] \rightarrow\mathbb{R}$ be the
function defined for $r \in(0,1]$ by
\[
\widehat{f}_{\lambda}(r):=(1-r) \bigl(1+2 \lambda r^{-s}-\lambda
\bigr)^{-1},
\]
where $  s:=1-\ln(2)^{-1} \ln(1-\lambda)>1$, and
$\widehat{f}_{\lambda}(0):=0$.
Note that for all $n \in\mathbb{N}$, $  f_{\lambda
}(n)=(1-\lambda^2)\widehat{f}_{\lambda}(2^{-n})$.

\begin{lemma} \label{maxf}
The function $\widehat{f}_{\lambda}$ reaches its maximum at a unique
point $r^*(\lambda)$, is strictly increasing on $ [0,r^*(\lambda
)  ]$, and strictly decreasing on $ [r^*(\lambda),1
]$. Moreover, for all $c>0$,
%
%
\begin{equation}
\widehat{f}_{\lambda}(c \sqrt{2\lambda})\underset{\lambda \rightarrow0}
{=}1-\bigl(c+c^{-1}\bigr)\sqrt{2\lambda}+o(\sqrt{\lambda})
\end{equation}
and
%
%
\begin{equation}
r^*(\lambda) \underset{\lambda\rightarrow0} {\sim}\sqrt{2 \lambda }.
\end{equation}
\end{lemma}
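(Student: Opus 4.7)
The plan is to treat the three claims (unimodality, the pointwise expansion, and the asymptotic equivalent for $r^*(\lambda)$) in turn, all via direct analysis of $\widehat{f}_\lambda$ and its first derivative.

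For unimodality, I would compute $\widehat{f}_\lambda'(r)$ and observe that its sign is that of
\[
L(r) := 2\lambda r^{-s-1}\bigl[s - (s+1)r\bigr] - (1-\lambda).
\]
Differentiating gives $L'(r) = -2\lambda s(s+1) r^{-s-2}(1-r) < 0$ on $(0,1)$, so $L$ is strictly decreasing there. Since $L(r) \to +\infty$ as $r \to 0^+$ and $L(s/(s+1)) = -(1-\lambda) < 0$, the equation $L(r)=0$ has a unique root $r^*(\lambda) \in (0, s/(s+1))$. Combined with $\widehat{f}_\lambda(0) = \widehat{f}_\lambda(1) = 0$ and $\widehat{f}_\lambda > 0$ on $(0,1)$, this yields strict monotonicity on either side of $r^*(\lambda)$.

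For the expansion at $r = c\sqrt{2\lambda}$, the key observation is that $s-1 = -\ln(1-\lambda)/\ln 2 = O(\lambda)$, whence
\[
r^{-(s-1)} = \exp\bigl(-(s-1)\ln(c\sqrt{2\lambda})\bigr) = 1 + O(\lambda |\ln\lambda|).
\]
Therefore $2\lambda r^{-s} = \tfrac{\sqrt{2\lambda}}{c}\bigl(1 + O(\lambda |\ln\lambda|)\bigr) = \tfrac{\sqrt{2\lambda}}{c} + o(\sqrt{\lambda})$. Substituting into $\widehat{f}_\lambda(r) = (1-r)/(1-\lambda + 2\lambda r^{-s})$ and expanding $1/(1+x) = 1 - x + O(x^2)$ to first order gives $\widehat{f}_\lambda(c\sqrt{2\lambda}) = 1 - (c + c^{-1})\sqrt{2\lambda} + o(\sqrt{\lambda})$, since the remaining cross terms are of order $\lambda = o(\sqrt{\lambda})$.

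For $r^*(\lambda) \sim \sqrt{2\lambda}$, I would first show $r^*(\lambda) \to 0$: otherwise, along a subsequence $r^*(\lambda) \geq \delta > 0$, the quantity $(r^*)^{-s-1}$ stays bounded, so the left side of the critical equation $2\lambda (r^*)^{-s-1}[s - (s+1)r^*] = 1 - \lambda$ would vanish, contradicting $1-\lambda \to 1$. Once $r^*(\lambda) \to 0$, the bracket $[s-(s+1)r^*] \to 1$, so the critical equation gives $(r^*)^{s+1} \sim 2\lambda$. Taking logarithms, $(s+1)\ln r^* = \ln(2\lambda) + o(1)$, hence $\ln r^* = \ln(2\lambda)/(s+1) + o(1)$. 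Using $1/(s+1) = 1/2 + O(\lambda)$ and $(2\lambda)^{O(\lambda)} = \exp(O(\lambda \ln\lambda)) \to 1$, this yields $r^*(\lambda) = \sqrt{2\lambda}\cdot(1+o(1))$. The main obstacle is bookkeeping with the $\lambda$-dependent exponent $s$: the factor $r^{-(s-1)}$ must be shown to converge to $1$ with an error small enough to sit inside the $o(\sqrt{\lambda})$ remainder in the expansion, and the same kind of control is needed to turn $(r^*)^{s+1} \sim 2\lambda$ into $r^* \sim \sqrt{2\lambda}$ without losing a factor in the exponent.
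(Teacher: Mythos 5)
Your argument is correct, and the first two claims are handled essentially as in the paper: the unimodality part is the same computation (your $L$ is the paper's numerator $h_{\lambda}(r)=\lambda-1+2\lambda\bigl(s-(1+s)r\bigr)r^{-s-1}$, with the same sign analysis of $L'$), and your expansion of $\widehat{f}_{\lambda}(c\sqrt{2\lambda})$ actually supplies the details the paper merely asserts, the key point in both cases being that $s-1=O(\lambda)$ makes $r^{-(s-1)}=1+O(\lambda|\ln\lambda|)$ negligible at the scale $\sqrt{\lambda}$. Where you genuinely diverge is the equivalent $r^*(\lambda)\sim\sqrt{2\lambda}$: the paper does not touch the first-order condition again, but instead evaluates the already-proved expansion at $c=1-\epsilon$, $1$, $1+\epsilon$, notes that $c+c^{-1}$ is minimized at $c=1$, and uses unimodality to sandwich $r^*(\lambda)$ in $\bigl[(1-\epsilon)\sqrt{2\lambda},(1+\epsilon)\sqrt{2\lambda}\bigr]$; you instead solve the critical equation $2\lambda (r^*)^{-s-1}\bigl[s-(s+1)r^*\bigr]=1-\lambda$ asymptotically, first forcing $r^*\to 0$, then taking logarithms and controlling the $\lambda$-dependent exponent via $1/(s+1)=1/2+O(\lambda)$ and $(2\lambda)^{O(\lambda)}\to 1$. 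Both are sound; the paper's sandwich is shorter because it recycles the expansion and avoids any further manipulation of $s$, while your route is more self-contained (it would give the equivalent for $r^*$ even without the $c$-expansion) at the cost of exactly the exponent bookkeeping you flag, which you do carry out correctly.
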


\begin{pf}
Differentiating $\widehat{f}_{\lambda}$ yields
\[
\widehat{f}_{\lambda}'(r)=\frac{-(1+2 \lambda r^{-s}-\lambda
)-(1-r)(-2\lambda s r^{-s-1})}{(1+2 \lambda r^{-s}-\lambda)^2}.
\]
The numerator of this expression is equal to $h_{\lambda}(r):=\lambda
-1+2\lambda(-(1+s)r+s)r^{-s-1}$. Note that
\[
h_{\lambda}'(r)=-2\lambda\bigl(s(1+s) (1-r)
\bigr)r^{-s-2}.
\]
We have $h_{\lambda}'<0$ on $(0,1)$, $ \lim_{r
\rightarrow0} h_{\lambda}(r)=+\infty$, and $h_{\lambda
}(1)=-(1+\lambda)$. Hence there exists $r^*(\lambda) \in(0,1)$ such
that $h_{\lambda}$ is strictly positive on $ (0,r^*(\lambda)
 ]$, and strictly negative on $ [r^*(\lambda),1  ]$.
Thus, $\widehat{f}_{\lambda}$ is strictly increasing on $
[0,r^*(\lambda)  ]$, and strictly decreasing on $
[r^*(\lambda),1  ]$.

If $c>0$, we have
\[
\widehat{f}_{\lambda}(c \sqrt{2 \lambda})\underset{\lambda \rightarrow0}
{=}1-\bigl(c+c^{-1}\bigr)\sqrt{2 \lambda}+o(\sqrt{\lambda}).
\]
Let $\varepsilon>0$. Applying the last relation to $c=1-\varepsilon$,
$c=1$, and $c=1+\varepsilon$ shows that for $\lambda$ small enough,
$\widehat{f}_{\lambda}(\sqrt{2 \lambda}) > \widehat{f}_{\lambda
}((1-\varepsilon) \sqrt{2 \lambda})$ and $\widehat{f}_{\lambda}(\sqrt
{2 \lambda}) > \widehat{f}_{\lambda}((1+\varepsilon) \sqrt{2 \lambda
})$. Thus, for $\lambda$ small enough, $r^*(\lambda) \in
[(1-\varepsilon) \sqrt{2\lambda},(1+\varepsilon) \sqrt{2\lambda}
]$. We deduce that $  r^*(\lambda) \underset{\lambda
\rightarrow0}{\sim}\sqrt{2 \lambda}$.
\end{pf}
%
We can now prove our main result.

\begin{theorem} \label{div}
In $\Gamma$, $(v_{\lambda})$ does not converge when $\lambda$ goes
to $0$.
\end{theorem}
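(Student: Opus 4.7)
The plan is to use the two preceding propositions to reduce $v_\lambda(1^{++})$ to the explicit expression $g_\lambda(a^*(\lambda), b^*(\lambda))$ with $a^*(\lambda) \in \argmax_{n\in\mathbb{N}} f_\lambda$ and $b^*(\lambda) \in \argmax_{n\in 2\mathbb{N}} f_\lambda$, and then exhibit two subsequences $\lambda_k \to 0$ along which this expression tends to different limits. The whole counterexample rests on the parity constraint $b \in 2\mathbb{N}$: Player 2's grid of available ``absorbing risks'' is twice as coarse as Player 1's, and this mismatch never averages out as $\lambda \to 0$.

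First I would use Lemma~\ref{maxf} to write $2^{-a^*(\lambda)} = c_1(\lambda)\sqrt{2\lambda}$ and $2^{-b^*(\lambda)} = c_0(\lambda)\sqrt{2\lambda}$, where the factors $c_1(\lambda), c_0(\lambda)$ remain in a compact subset of $(0,\infty)$ (in $[2^{-1/2},2^{1/2}]$ for Player~1, in $[1/2,2]$ for Player~2, since $b^*$ must be even). The expansion $\widehat{f}_\lambda(c\sqrt{2\lambda}) = 1 - (c + c^{-1})\sqrt{2\lambda} + o(\sqrt\lambda)$ and $f_\lambda = (1-\lambda^2)\widehat{f}_\lambda(2^{-\cdot})$ give
\begin{equation*}
1 - f_\lambda(b^*) = \beta(\lambda)\sqrt{2\lambda} + o(\sqrt\lambda), \qquad 1 - f_\lambda(a^*)f_\lambda(b^*) = \bigl(\alpha(\lambda) + \beta(\lambda)\bigr)\sqrt{2\lambda} + o(\sqrt\lambda),
\end{equation*}
where $\alpha(\lambda) = c_1 + c_1^{-1}$ and $\beta(\lambda) = c_0 + c_0^{-1}$. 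Hence
\begin{equation*}
v_\lambda(1^{++}) = g_\lambda(a^*, b^*) = \frac{\beta(\lambda)}{\alpha(\lambda) + \beta(\lambda)} + o(1).
\end{equation*}

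Next I would extract two subsequences. Take $\lambda_k$ with $\sqrt{2\lambda_k} = 4^{-k}$, i.e.\ $\lambda_k = 2^{-4k-1}$. Then $n = 2k$ is the unique best power of $2$ for both players, giving $c_1 = c_0 = 1$, so $\alpha = \beta = 2$ and $v_{\lambda_k}(1^{++}) \to 1/2$. Take instead $\mu_k$ with $\sqrt{2\mu_k} = 2\cdot 4^{-k}$, i.e.\ $\mu_k = 2^{1-4k}$. Player~1 can still match exactly by choosing $n = 2k-1$, so $c_1 = 1$ and $\alpha = 2$; but Player~2's two best options $b \in \{2(k-1), 2k\}$ correspond to $c_0 \in \{2, 1/2\}$, both yielding $\beta = 5/2$. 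Thus $v_{\mu_k}(1^{++}) \to 5/9$. Since $1/2 \neq 5/9$, the family $(v_\lambda)$ cannot converge as $\lambda \to 0$.

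The main obstacle I expect is the verification that the suggested $a^*, b^*$ really do maximize $f_\lambda$ at each $\lambda_k$ and $\mu_k$ (as opposed to some far-away integer), and that the $o(\sqrt\lambda)$ error in Lemma~\ref{maxf} is small enough, uniformly on a small window of values of $c$, to separate the ``best'' power-of-two choice from its immediate neighbours. This is a finite comparison at each $\lambda_k$: it suffices to compare $\widehat{f}_\lambda$ at $2^{-n}$ for $n$ within a bounded distance of $-\log_2\sqrt{2\lambda}$, since outside this bounded window $\widehat{f}_\lambda$ is already strictly smaller by unimodality (Lemma~\ref{maxf}). Combining unimodality with the explicit asymptotic values $c + c^{-1} \in \{2, 5/2\}$ for the candidates involved isolates the correct maximizer and locks in the two different limits.
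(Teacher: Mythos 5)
Your proposal is correct and follows essentially the same route as the paper: reduce $v_\lambda(1^{++})$ to $g_\lambda(a^*,b^*)$ via Propositions \ref{eq2} and \ref{stratopt}, then use the expansion and unimodality in Lemma \ref{maxf} along the subsequences $\lambda_k=2^{-4k-1}$ and $\mu_k=2^{1-4k}$ (the latter is the paper's $\mu_m=2^{-4m-3}$ re-indexed), obtaining the same limits $1/2$ and $5/9$. The only cosmetic difference is your parametrization by $c_1,c_0$ and $\alpha,\beta$, and your explicit acknowledgment of the finite comparison needed to pin down the argmax, which the paper asserts directly from Lemma \ref{maxf}.
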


\begin{pf}
We are going to show that the sequence $(v_{\lambda}(1^{++}))$ does
not converge when $\lambda$ goes to 0.

Set $\lambda_m=2^{-4m-1}$ and $\mu_m=2^{-4m-3}$. Hence $\sqrt
{2\lambda_m}=2^{-2m}$ and
$\sqrt{2\mu_m}=2^{-2m-1}$. By Lemma \ref{maxf}, for $m$ large enough,
\[
\mathop{\argmax}_{n \in\mathbb{N}} f_{\lambda_m}=\mathop{\argmax}_{n \in2\mathbb
{N}}
f_{\lambda_m}= \{2m \}.
\]
Thus, for the discount factors $(\lambda_m)_{m \in\mathbb{N}}$, the
fact that Player~1 has a wider set of strategies than Player~2 does not
affect the outcome of the game. By Proposition~\ref{stratopt}, we have
\[
v_{\lambda_m}\bigl(1^{++}\bigr)= \bigl(1-f_{\lambda_m}(2m)
\bigr) \bigl(1-f_{\lambda_m}(2m)^2 \bigr)^{-1}=
\bigl(1+f_{\lambda_m}(2m) \bigr)^{-1}.
\]
By Lemma \ref{maxf}, $(f_{\lambda_m}(2m))$ converges to 1 when $m$
goes to infinity, thus $(v_{\lambda_m}(1^{++}))$ converges to
$ 1/2$.

By Lemma \ref{maxf} again, for $m$ large enough, we have
\[
\mathop{\argmax}_{n \in\mathbb{N}} f_{\mu_m}= \{2m+1 \}\quad \mbox{and}\quad
\mathop{\argmax}_{n \in2\mathbb{N}} f_{\mu_m} \subset \{2m,2m+2 \}.
\]
Contrary to the previous situation, Player~1 has an advantage over her
opponent: Player~2 cannot choose $2m+1$. In $\widetilde{\Gamma
}{}^{1^{++}}_{\mu_m}$, choosing $t(2m)$ or $t(2m+2)$ instead
of $t(2m+1)$ changes the dynamics of the state, and makes this
advantage substantial. Formally, we have
\[
v_{\mu_m}\bigl(1^{++}\bigr)=\min \biggl(\frac{1-f_{\mu_m}(2m)}{1-f_{\mu
_m}(2m)f_{\mu_m}(2m+1)},
\frac{1-f_{\mu_m}(2m+2)}{1-f_{\mu
_m}(2m+2)f_{\mu_m}(2m+1)} \biggr).
\]
By Lemma \ref{maxf}, we have
\begin{eqnarray*}
f_{\mu_m}(2m+1)&\underset{m \rightarrow+\infty} {=}& 1-2 \sqrt{2\mu
_m}+o(\sqrt{\mu_m}),
\\
f_{\mu_m}(2m)&\underset{m \rightarrow+\infty} {=}& 1-5/2 \sqrt{2\mu
_m}+o(\sqrt{\mu_m}),
\\
f_{\mu_m}(2m+2)&\underset{m \rightarrow+\infty} {=}& 1-5/2 \sqrt {2
\mu_m}+o(\sqrt{\mu_m}).
\end{eqnarray*}
Hence,
\[
\frac{1-f_{\mu_m}(2m)}{1-f_{\mu_m}(2m)f_{\mu_m}(2m+1)} \underset{m \rightarrow+\infty} {\sim} \frac{5/2\sqrt{2\mu_m}}{ (2+5/2 )\sqrt{2\mu_m}}=5/9
\]
and similarly
\[
\frac{1-f_{\mu_m}(2m+2)}{1-f_{\mu_m}(2m+2)f_{\mu_m}(2m+1)} \underset{m \rightarrow+\infty} {\sim} \frac{5/2 \sqrt{2\mu_m}}{ (2+5/2 )\sqrt{2\mu_m}}=5/9.
\]
The sequences $(v_{\lambda_m}(1^{++}))$ and $(v_{\mu_m}(1^{++}))$
converge to different limits, hence $(v_{\lambda})$ does not converge.
\end{pf}
%

\begin{remark}
More generally, for every initial state $p \in P \setminus \{
1^*,0^*  \}$, $(v_{\lambda}(p))$ does not converge. Consider,
for example, the case $p=0_n$, for some $n \in\mathbb{N}$. Let $N
\geq n$. Consider the following strategy $\sigma$ for Player~1 in
$\Gamma_{\lambda}^{0_n}$: play $C$ until \mbox{$p_m=0_N$}, then play $Q$,
and then play optimally in $\Gamma_{\lambda}^{1^{++}}$ if the state
$1^{++}$ is reached. For $\lambda$ small enough, the strategy $\sigma
$ guarantees $  v_{\lambda}(1^{++})-2/N$ in $\Gamma
_{\lambda}^{0_n}$. We deduce that $\lim_{\lambda\rightarrow0} \llvert  v_{\lambda}(0_n)-v_{\lambda}(1^{++})\rrvert  =0$. With\vspace*{1pt} a similar
argument, one can show that for all $(p,p') \in P^2 \setminus \{
1^*,0^*  \}$, $\lim_{\lambda\rightarrow0} \llvert  v_{\lambda
}(p)-v_{\lambda}(p')\rrvert  =0$, which gives the result.
\end{remark}

In this section, we have shown that Conjectures \ref{con1} and \ref{con2} are false, by presenting an example of a repeated game with public
signals and perfect observation of the actions where $(v_{\lambda})$
does not converge. In the following section, we construct a repeated
game belonging to the same class, where neither $(v_\lambda)$ nor
$(v_n)$ converge.

\section{From \texorpdfstring{$(v_{\lambda})$}{(v{lambda})} to $(v_n)$}\label{sec3} \label{vn}

\subsection{Motivation of the example}\label{sec3.1}
The idea of the construction of the game is based on the following
lemma, which can be deduced from the proof of \citet{sorin02b}, Theorem~C.8, page~177. We provide the proof for completeness.

\begin{lemma} \label{neyman}
Let $\Gamma$ be any repeated game with symmetric information, and $p
\in\Delta(K)$. Let $P \subset\Delta(K)$ be the set of states which
can be reached with positive probability under some strategy vector in
the game $\widetilde{\Gamma}{}^p$. Let $n_0,n \in\mathbb{N}^*$, and
for $m \in\mathbb{N}^*$ set $  w_m:=v_{1/m}$. Let $\llVert\cdot  \rrVert  $ denote the supremum over $P$. Then the following
inequality holds:
\[
\llVert v_n-w_n \rrVert \leq\frac{n_0}{n} \llVert
v_{n_0}-w_{n_0}\rrVert +\sum_{m=n_0}^{n-1}
\llVert w_m-w_{m+1} \rrVert.
\]
\end{lemma}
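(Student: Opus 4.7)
The plan is to compare $v_n$ and $w_n$ through the Shapley operator and iterate. Define the (nonlinear) operator $\Phi_\lambda$ acting on bounded functions $f:\Delta(K)\to\m{R}$ by
$$\Phi_\lambda(f)(p):=\max_{x\in\Delta(I)}\min_{y\in\Delta(J)}\bigl\{\lambda g(p,x,y)+(1-\lambda)\m{E}^p_{x,y}(f)\bigr\}.$$
Equation \eqref{shapley} identifies $w_n$ as the unique fixed point of $\Phi_{1/n}$, while the $n$-stage value satisfies the standard dynamic programming recursion $v_n=\Phi_{1/n}(v_{n-1})$ starting from $v_0\equiv 0$.

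Next, I would observe that $P$ is stable under the transitions of $\widetilde{\Gamma}$: any $p''$ with $\widetilde{q}(p',i,j)(p'')>0$ for some $p'\in P$ and $(i,j)\in I\times J$ is itself reachable from $p$ with positive probability, and so belongs to $P$. Consequently, for $p'\in P$, the quantity $\m{E}^{p'}_{x,y}(f)$ depends only on $f|_P$. Applying the standard inequality $|\max\min F-\max\min G|\leq\|F-G\|_\infty$ pointwise in $p'\in P$ yields the contraction
$$\bigl\|\Phi_\lambda(f)-\Phi_\lambda(g)\bigr\|\leq(1-\lambda)\|f-g\|,$$
where $\|\cdot\|$ is the supremum over $P$. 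Writing $v_n-w_n=\Phi_{1/n}(v_{n-1})-\Phi_{1/n}(w_n)$ and inserting $w_{n-1}$ via the triangle inequality then gives
$$\|v_n-w_n\|\leq\tfrac{n-1}{n}\bigl(\|v_{n-1}-w_{n-1}\|+\|w_{n-1}-w_n\|\bigr).$$

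Setting $a_n:=\|v_n-w_n\|$ and $b_n:=\|w_n-w_{n+1}\|$, this rearranges to $n\,a_n\leq(n-1)a_{n-1}+(n-1)b_{n-1}$, so iterating and telescoping from $n_0$ up to $n$ produces
$$n\,a_n\leq n_0\,a_{n_0}+\sum_{m=n_0}^{n-1} m\,b_m.$$
Dividing by $n$ and bounding $m/n\leq 1$ in the sum yields the claimed inequality. No genuine obstacle arises; the only point requiring attention is to work with the supremum over $P$ rather than over all of $\Delta(K)$, which is legitimate precisely because $P$ is forward-invariant under $\widetilde{q}$, making $\Phi_{1/n}$ a $\tfrac{n-1}{n}$-contraction in $\|\cdot\|$.
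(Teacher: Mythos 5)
Your proposal is correct and follows essentially the same route as the paper: the paper's selection of an optimal $x$ in the recursion for $v_m$ and an optimal $y$ in the fixed-point equation for $w_m$ is exactly an in-line proof of the $(1-\lambda)$-contraction of the Shapley operator that you invoke, and the resulting inequality $m\left\|v_m-w_m\right\| \leq (m-1)\left\|v_{m-1}-w_{m-1}\right\|+(m-1)\left\|w_{m-1}-w_m\right\|$ and its telescoping are identical. Your explicit remark on the forward-invariance of $P$, which justifies taking the supremum over $P$, is a point the paper leaves implicit.
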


\begin{pf}
Let $m \geq1$ and $p \in P$. We have the following dynamic programming
principle [see \citet{sorin02b}, Properties C.13, page~181]:
%
%
\begin{eqnarray}
\label{prog1} \qquad v_m(p)&=&\max_{x \in\Delta(I)} \min
_{y \in\Delta(J)} \bigl\{m^{-1}g(p,x,y)+(m-1) m^{-1}
\mathbb{E}^p_{x,y}(v_{m-1}) \bigr\}
\\
\label{prog1d} &=& \min_{y \in\Delta(J)} \max_{x \in\Delta(I)}
\bigl\{m^{-1}g(p,x,y)+(m-1) m^{-1} \mathbb{E}^p_{x,y}(v_{m-1})
\bigr\}
\end{eqnarray}
and
%
%
\begin{eqnarray}
\label{prog2} \qquad w_m(p)&=& \max_{x \in\Delta(I)} \min
_{y \in\Delta(J)} \bigl\{m^{-1} g(p,x,y)+(m-1)m^{-1}
\mathbb{E}^p_{x,y}(w_m) \bigr\}
\\
&=& \label{prog2d} \min_{y \in\Delta(J)} \max_{x \in\Delta(I)}
\bigl\{m^{-1} g(p,x,y)+(m-1)m^{-1} \mathbb{E}^p_{x,y}(w_m)
\bigr\}.
\end{eqnarray}
%
Let $x \in\Delta(I)$ be optimal in (\ref{prog1}) and $y \in\Delta
(J)$ be optimal in (\ref{prog2d}). We have
\begin{eqnarray*}
v_m(p) &\leq& m^{-1} g(p,x,y)+(m-1) m^{-1}
\mathbb{E}^p_{x,y}(v_{m-1}),
\\
w_m(p) &\geq& m^{-1} g(p,x,y)+(m-1)m^{-1}
\mathbb{E}^p_{x,y}(w_{m}).
\end{eqnarray*}
The combination of these two inequalities gives
\[
v_{m}(p)-w_m(p) \leq(m-1)m^{-1}\llVert
v_{m-1}-w_m \rrVert.
\]
Taking $x' \in\Delta(I)$ optimal in (\ref{prog1d}) and $y' \in
\Delta(J)$ optimal in (\ref{prog2}) gives the symmetric inequality:
\[
w_m(p)-v_{m}(p) \leq(m-1)m^{-1}\llVert
v_{m-1}-w_m \rrVert.
\]
Hence,
\[
\llVert v_{m}-w_m\rrVert \leq(m-1)m^{-1}
\llVert v_{m-1}-w_m \rrVert
\]
and
\[
m \llVert v_{m}-w_m\rrVert \leq(m-1) \llVert
v_{m-1}-w_{m-1}\rrVert +(m-1)\llVert w_{m-1}-w_m
\rrVert.
\]
Let $n,n_0 \geq1$. Summing the last inequality from $n_0+1$ to $n$ yields
\[
n \llVert v_{n}-w_n\rrVert \leq n_0 \llVert
v_{n_0}-w_{n_0}\rrVert +\sum_{m=n_0}^{n-1}
m \llVert w_{m}-w_{m+1}\rrVert.
\]
Dividing by $n$ gives the desired result.
\end{pf}
%

We construct a family of repeated games with symmetric information
$(\Gamma(r))_{r \geq1}$, such that the value $ (v^r_{\lambda
} )$ of $\Gamma_{\lambda}(r)$ does not converge. Moreover, for
all $p \in\Delta(K)$, the derivative function of $v^r_{\lambda}(p)$
with respect to $\lambda$ is bounded by $C(r) \lambda^{-1}$ for
$\lambda$ sufficiently small, where $C(r)>0$ is independent of $p$ and
goes to $0$ as $r$ goes to infinity. Applying Lemma \ref{neyman}, we
are then able to show that for $r$ large enough, $(v^r_n)$ does not converge.

\subsection{Description of the game}\label{sec3.2}
Let $r \geq1$.
Consider the following repeated game with symmetric information $\Gamma
(r)$. The state space is
\[
K= \bigl\{1^{++},1^{T_1},1^{T_2},\ldots,1^{T_{2r-1}},1^{+},1^*,0^{++},0^{T_1},0^{T_2},
\ldots,0^{T_{r-1}},0^+,0^* \bigr\},
\]
the actions sets are $I=J= \{ C,Q  \}$, and the signal set
is $A= \{D,D'  \}$.

Payoffs are independent of actions, and are 1 in states belonging to
$ \{1^*,\break 1^{++}, 1^{T_1},\ldots,1^{T_{2r-1}},1^+ \}$, and 0
in states belonging to
$ \{0^*,0^{++},0^{T_1},\ldots,0^{T_{r-1}},\break 0^+  \}$.

Player~2 controls the states $1^{++},1^{T_1},\ldots,1^{T_{2r-1}},1^+$,
and Player~1 controls the states $0^{++},0^{T_1},\ldots,0^{T_{r-1}},0^+$.
Hence, $q$ can be seen as a map from $K \times \{C,Q  \}$
to $\Delta(K \times \{D,D' \})$. Lastly, states $1^*$ and
$0^*$ are absorbing states.
The next figure describes the transitions in the state $0^{T_{l}}$,
where $l \in \{0,1,\ldots,r-2  \}$, and in the states
$0^{T_{r-1}}$, $0^+$ and $0^*$ (by convention $0^{T_0}:=0^{++}$). To
simplify Figure~\ref{fig4}, the transitions in states $0^{T_0}=0^{++}$ and
$0^{T_{l+1}}$ are not represented.
%
\begin{figure}

\includegraphics{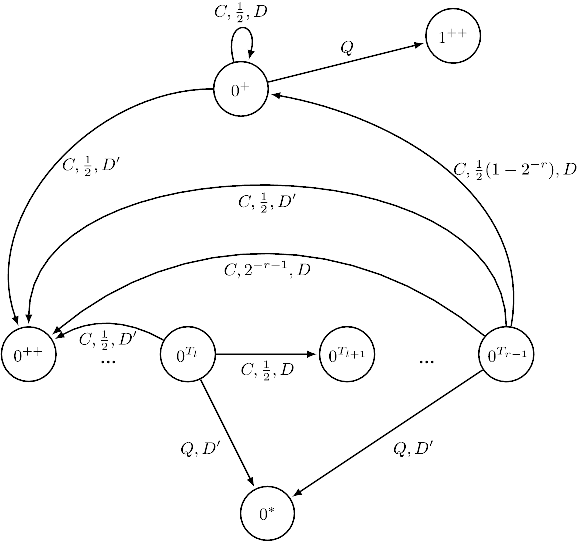}

\caption{Transitions in the states controlled by Player~1.}
\label{fig4}
%
%
%
%
%
%
\end{figure}

In the states controlled by Player~2, the transitions are analog: one
replaces $0$ by~$1$ and $r-1$ by $2r-1$ (with the convention $1^{T_0}:=1^{++}$).

\begin{remark}
The case $r=1$ corresponds to the example of Section~\ref{counter}.
\end{remark}

We now argue that the game $\widetilde{\Gamma}(r)$ presents strong
similarities with the game $\widetilde{\Gamma}$ introduced in
Section~\ref{counter}.
Let $m \in\mathbb{N}$ and $l \in \{0,1,\ldots,r-1  \}$.

Let $0_{mr+l}:=2^{-mr} \cdot0^{T_l}+(1-2^{-mr}) \cdot0^+$, and for $l
\in \{0,1,\ldots,2r-1  \}$ let
$1_{2mr+l}:=2^{-2mr} \cdot1^{T_l}+(1-2^{-2mr}) \cdot1^+$. Let
\[
P:=\cupp_{n \in\mathbb{N}} \{1_n,0_n \} \cup \bigl\{
1^*,0^* \bigr\}.
\]
The set $P$ is the set of all states which can be reached with positive
probability in $\widetilde{\Gamma}(r)^{1^{++}}$ (note that $P$ has
the same formal definition as the set $P$ in Section~\ref{red}).
Moreover, when a player plays $C$ in some state $p \in P$, the
transition is identical to the transition in our first example
$\widetilde{\Gamma}$ (see Section~\ref{red}). When Player~1 (resp.,~2) plays $Q$ at state\vspace*{2pt} $0_{mr+l}$ (resp., $1_{2mr+l}$), the state is
absorbed in state $0^*$ (resp., $1^*$) with probability $2^{-mr}$
(resp., $2^{-2mr}$). Thus the only difference with $\widetilde{\Gamma
}$ is that in $\widetilde{\Gamma}(r)$, Player~1 can only take
absorbing risks in the set $ \{2^{-mr}, m \in\mathbb{N}  \}
$, and Player~2 can only take absorbing risks in the set $ \{
2^{-2mr}, m \in\mathbb{N}  \}$. The next proposition is the
equivalent of Proposition \ref{eq2}, and its proof is identical.



\begin{proposition} \label{eql}
The game $\widetilde{\Gamma}(r)^{1^{++}}_{\lambda}$ has the same
value as the strategic-form game $G_{\lambda}(r)$, with action set $r
\mathbb{N}$ for Player~1, $2 r \mathbb{N}$ for Player~2, and payoff
\[
g^r_{\lambda}(a,b):=g_{\lambda}(a,b),
\]
where\vspace*{1pt} $g_{\lambda}$ is defined in (\ref{discpayoff}).
Moreover, optimal strategies in $G_{\lambda}(r)$ induce optimal
strategies in $\widetilde{\Gamma}(r)^{1^{++}}_{\lambda}$.
\end{proposition}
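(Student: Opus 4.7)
The plan is to mirror the proof of Proposition \ref{eq2}, modifying only the argument that identifies the relevant pure stationary strategies, since the payoff computation will then be literally the same.

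First I would invoke Lemma \ref{pure}: in $\widetilde{\Gamma}(r)$ every state in the reachable set $P$ is controlled by exactly one player (Player 1 controls the $0_n$ states, Player 2 the $1_n$ states, and the absorbing states $0^*, 1^*$ are trivial), so there exist pure stationary optimal strategies in $\widetilde{\Gamma}(r)^{1^{++}}_\lambda$. A pure stationary strategy for Player 1 is specified by the set $S_1 \subseteq \m{N}$ of indices at which she plays $Q$ in state $0_n$, and analogously for Player 2.

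Next I would show that only strategies of the form $s(a)$ with $a \in r\m{N}$ need to be considered for Player 1, and only strategies of the form $t(b)$ with $b \in 2r\m{N}$ for Player 2. The key observation, which replaces the ``$Q$ in $1_{2n+1}$ is dominated'' step of Proposition \ref{eq2}, is that in $\widetilde{\Gamma}(r)$ the absorbing probability at $0_{mr+l}$ is $2^{-mr}$ for every $l \in \{0,\dots,r-1\}$, i.e.\ it depends only on $m$. Among the states $0_{mr}, 0_{mr+1}, \dots, 0_{mr+r-1}$, the state $0_{mr}$ is reached strictly first (since when $C$ is played the belief can only advance from $0_n$ to $0_{n+1}$ or reset to $0_0$). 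Hence playing $Q$ at any $0_{mr+l}$ with $l>0$ while playing $C$ at $0_{mr}$ is strictly dominated by playing $Q$ at $0_{mr}$ itself: the absorbing risk is unchanged, but the extra stages spent before playing $Q$ yield payoff $0$ and push the favorable absorption outcome further into the discounted future. Moreover, if $S_1$ contains several indices, only the smallest one is ever triggered (the belief cannot reach a higher index before first hitting a lower one), so we may assume $S_1 = \{a\}$ for some $a \in r\m{N}$. The same reasoning on the $1_n$ chain, combined with the fact that the absorbing risk at $1_{2mr+l}$ is $2^{-2mr}$ for all $l \in \{0,\dots,2r-1\}$, restricts Player 2 to strategies $t(b)$ with $b \in 2r\m{N}$.

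Finally I would compute $\gamma^{1^{++}}_\lambda(s(a), t(b))$ exactly as in the proof of Proposition \ref{eq2}. Define $T_b = \inf\{m \geq 1\mid j_m = Q\}$ and $T_a = \inf\{m \geq T_b+1 \mid i_m = Q\} - T_b$. Because the $C$-transitions in $P$ are, by construction of $\Gamma(r)$, identical to those in $\widetilde{\Gamma}$, the distributions of $T_a$ and $T_b$ are the same generalized geometric distributions of orders $a$ and $b$ with parameter $1/2$. The one-step recursion on the payoff (absorption in $1^*$ with probability $2^{-b}$, then running through $T_b$ stages of payoff $1$ and $T_a$ stages of payoff $0$, then either absorption in $0^*$ with probability $2^{-a}$ or restart in $1^{++}$) is unchanged, the independence $\m{E}((1-\lambda)^{T_a+T_b}) = \m{E}((1-\lambda)^{T_a})\m{E}((1-\lambda)^{T_b})$ still holds under $s(a) \otimes t(b)$, and the \cite{PGP83} formula for $\m{E}((1-\lambda)^{T_n})$ applies verbatim. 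Assembling these yields $\gamma^{1^{++}}_\lambda(s(a), t(b)) = g_\lambda(a,b)$. Since both players' effective strategy sets are $r\m{N}$ and $2r\m{N}$ respectively, this is precisely the payoff of $G_\lambda(r)$, and optimal strategies transfer across by Lemma \ref{pure}.

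The main obstacle is the dominance step in the second paragraph: one must argue not only that playing $Q$ at $0_{mr+l}$ instead of $0_{mr}$ is worse, but that the full pure stationary strategy space collapses to $\{s(a) : a \in r\m{N}\}$, requiring a case analysis on the set $S_1$ and the observation that intermediate states of type $0_{mr+l}$ can never be visited without first passing through $0_{mr}$. The payoff computation itself is a direct rerun of Proposition \ref{eq2} and demands no new ideas.
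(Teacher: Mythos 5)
Your proposal is correct and follows exactly the route the paper intends: the paper simply states that the proof is identical to that of Proposition \ref{eq2}, and your argument reruns that proof, with the only genuinely new ingredient being the dominance step showing that quitting at $0_{mr+l}$ (resp. $1_{2mr+l}$) with $l>0$ is dominated by quitting at $0_{mr}$ (resp. $1_{2mr}$) because the absorbing risk depends only on $m$, which you handle correctly. The payoff computation via the generalized geometric distribution then carries over verbatim, as you say.
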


\subsection{Asymptotic study of \texorpdfstring{$G_{\lambda}(r)$}{G{lambda}(r)}}\label{sec3.3} Let $r \geq2$. For
$m \geq1$, let
$\lambda_m:=2^{-4mr-1}$ and $\mu_m:=2^{-4mr-2r-1}$.
Hence, $\sqrt{2 \lambda_m}=2^{-2m r}$ and
$\sqrt{2 \mu_m}=2^{-(2m+1)r}$.
Proceeding exactly the same way as in Section~\ref{limit}, we get the
following proposition.

\begin{proposition} \label{vl}
\[
\lim_{m \rightarrow+\infty} v_{\lambda_m}^r
\bigl(1^{++}\bigr)=1/2\quad\mbox {and}\quad \lim_{m \rightarrow+\infty}
v_{\mu_m}^r\bigl(1^{++}\bigr)=\frac
{2^r+2^{-r}}{2^r+2^{-r}+2}.
\]
\end{proposition}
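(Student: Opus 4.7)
The plan is to reproduce the argument of Theorem \ref{div} almost verbatim, with two adjustments: Player 1's action set is restricted to $r\mathbb{N}$ and Player 2's to $2r\mathbb{N}$, and the specific sequences $\lambda_m$ and $\mu_m$ are rescaled so that $\sqrt{2\lambda_m}$ (resp.\ $\sqrt{2\mu_m}$) falls on an integer power of $2$ of the right parity. By Proposition \ref{eql} it suffices to analyse $G_\lambda(r)$, and the argument used in Proposition \ref{stratopt} still applies: the function $(x,y)\mapsto (1-y)(1-xy)^{-1}$ is monotone in each variable on $[0,1)^2$, so dominant strategies in $G_\lambda(r)$ are obtained by maximising $f_\lambda$ over $r\mathbb{N}$ and $2r\mathbb{N}$ respectively.

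First I would handle $\lambda_m$. Since $\sqrt{2\lambda_m}=2^{-2mr}$, Lemma \ref{maxf} gives $r^*(\lambda_m)\sim 2^{-2mr}$, and the index $n=2mr$ belongs simultaneously to $r\mathbb{N}$ and to $2r\mathbb{N}$. Using the monotonicity of $\widehat{f}_{\lambda_m}$ on each side of $r^*(\lambda_m)$ together with the estimate $\widehat{f}_\lambda(c\sqrt{2\lambda})=1-(c+c^{-1})\sqrt{2\lambda}+o(\sqrt\lambda)$ applied to $c=2^{\pm r}$, one checks that for $m$ large enough $2mr$ is the unique maximiser of $f_{\lambda_m}$ on both $r\mathbb{N}$ and $2r\mathbb{N}$. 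Then, exactly as in Theorem \ref{div},
\begin{equation*}
v_{\lambda_m}^r(1^{++})=\frac{1-f_{\lambda_m}(2mr)}{1-f_{\lambda_m}(2mr)^2}=\frac{1}{1+f_{\lambda_m}(2mr)}\underset{m\to+\infty}{\longrightarrow}\frac12,
\end{equation*}
because $f_{\lambda_m}(2mr)=(1-\lambda_m^2)\widehat{f}_{\lambda_m}(\sqrt{2\lambda_m})\to 1$ by Lemma \ref{maxf}.

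The case of $\mu_m$ is the one requiring a genuine computation, and is where I expect the only subtlety to lie: here $\sqrt{2\mu_m}=2^{-(2m+1)r}$, so $(2m+1)r\in r\mathbb{N}$ but $(2m+1)r\notin 2r\mathbb{N}$. Hence Player 1's dominant strategy is $a^*=(2m+1)r$, while Player 2 is forced to choose between the two closest even multiples, $b\in\{2mr,2(m+1)r\}$, and the same monotonicity argument as in Theorem \ref{div} restricts the maximiser of $f_{\mu_m}$ over $2r\mathbb{N}$ to this pair. Applying the asymptotic formula of Lemma \ref{maxf} with $c=1$, $c=2^{r}$ and $c=2^{-r}$ respectively gives
\begin{equation*}
f_{\mu_m}((2m+1)r)=1-2\sqrt{2\mu_m}+o(\sqrt{\mu_m}),
\end{equation*}
\begin{equation*}
f_{\mu_m}(2mr)=1-(2^r+2^{-r})\sqrt{2\mu_m}+o(\sqrt{\mu_m}),
\end{equation*}
and the same expansion for $f_{\mu_m}(2(m+1)r)$ (with $c=2^{-r}$, which yields the same leading term $2^{-r}+2^r$).

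To conclude I would substitute these expansions into
\begin{equation*}
v_{\mu_m}^r(1^{++})=\min_{b\in\{2mr,2(m+1)r\}}\frac{1-f_{\mu_m}(b)}{1-f_{\mu_m}((2m+1)r)f_{\mu_m}(b)}.
\end{equation*}
Both arguments of the minimum are equivalent to
\begin{equation*}
\frac{(2^r+2^{-r})\sqrt{2\mu_m}}{(2+2^r+2^{-r})\sqrt{2\mu_m}}=\frac{2^r+2^{-r}}{2^r+2^{-r}+2},
\end{equation*}
proving the second limit. The main technical obstacle is just to be careful with the error terms when taking the ratio: the leading $\sqrt{2\mu_m}$ cancels, so one must verify that the $o(\sqrt{\mu_m})$ remainders in the numerator and denominator are negligible compared to the common factor $\sqrt{2\mu_m}$, which is immediate from the expansions above.
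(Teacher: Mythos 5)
Your proposal is correct and follows essentially the same route as the paper: the paper's own proof is just a sketch invoking Lemma \ref{maxf} to identify $\argmax_{n\in r\m{N}} f$ and $\argmax_{n\in 2r\m{N}} f$ for the two sequences of discount factors and then referring back to the computation in Theorem \ref{div}, which is exactly the argument you spell out (including the correct expansions with $c=1$ and $c=2^{\pm r}$ and the resulting limit $(2^r+2^{-r})/(2^r+2^{-r}+2)$).
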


\begin{pf*}{Sketch of proof}
For $m$ large enough, we have by Lemma \ref{maxf}
\[
\mathop{\argmax}_{n \in r\mathbb{N}} f_{\lambda_m}=\mathop{\argmax}_{n \in2r\mathbb
{N}}
f_{\lambda_m}= \{2mr \}.
\]

We deduce (see the proof of Theorem \ref{div}) that $(v^r_{\lambda
_m})(1^{++})$ converges to $1/2$. By Lemma \ref{maxf} once again, for
$m$ large enough, we have
\[
\mathop{\argmax}_{n \in r\mathbb{N}} f_{\mu_m} = \bigl\{(2m+1)r \bigr\}
\]
and
\[
\mathop{\argmax}_{n \in2r\mathbb{N}} f_{\mu_m} \subset \bigl
\{2mr,2(m+1)r \bigr\}.
\]
We deduce (see the proof of Theorem \ref{div}) that $v^r_{\mu
_m}(1^{++})$ converges to $(2^r+2^{-r})/(2^r+2^{-r}+2)$.
\end{pf*}
We now use Lemma \ref{neyman} to show that the value $(v^r_n)$ of the
game $\Gamma(r)_n$ does not converge. We adopt the following notation:
if $(\lambda,p) \in(0,1] \times\Delta(K)$, we call $(v^r_{\lambda
})'(p)$ the derivative\vspace*{1pt} of $v^r_{\lambda}(p)$ with respect to $\lambda
$, evaluated in $\lambda$. Analogously, $f_{\lambda}'(p)$ is the
derivative of $f_{\lambda}$ with respect to $\lambda$, evaluated in
$\lambda$.


The following lemma gives a majorization of $(v^r_{\lambda})'(p)$ on
certain subintervals.

\begin{lemma} \label{boundedvar}
There\vspace*{2pt} exists $m_0 \geq1$ such that for all $m \geq m_0$, there exists
$\mu^0_m \in [\mu_m,2^{r/2-1} \mu_m  ]$ such that for
all $p \in P$ and for all $\mu\in [\mu_m,2^{r/2-1}\mu_m
]\setminus \{\mu^0_m  \}$, $v^r_{\mu}(p)$ is
differentiable at $\mu$ and
%
%
\begin{equation}
\label{diff1} \bigl\llvert \bigl(v^r_{\mu}
\bigr)'(p)\bigr\rrvert \leq2^{-r/2} \mu^{-1},
\end{equation}
and for all $\lambda_m \leq\lambda\leq2^{r-1}\lambda_m$,
$v^r_{\lambda}(p)$ is differentiable at $\lambda$ and
%
%
\begin{equation}
\label{diff2} \bigl\llvert \bigl(v^r_{\lambda}
\bigr)'(p)\bigr\rrvert \leq2^{r/2+2} \lambda^{-1/2}.
\end{equation}
\end{lemma}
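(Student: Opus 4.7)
The plan is to show that on each of the two intervals the pair of dominant stationary strategies is locally constant (with at most one exception), and then differentiate the resulting closed-form expression for $v^r_\mu(p)$.

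Set $c_n := 2^{-n}/\sqrt{2\mu}$, so that by Lemma \ref{maxf} the approximation $\widehat{f}_\mu(c\sqrt{2\mu}) = 1-(c+c^{-1})\sqrt{2\mu}+o(\sqrt{\mu})$ reduces the argmax problem to minimising $c_a + c_a^{-1}$. Comparing consecutive candidates in $r\m{N}$ (resp.\ $2r\m{N}$) shows that $a = kr$ is the unique minimiser in $r\m{N}$ iff $c_{kr} \in (2^{-r/2}, 2^{r/2})$, and analogously in $2r\m{N}$ with threshold $2^{\pm r}$. For $\lambda \in [\lambda_m, 2^{r-1}\lambda_m]$ this yields $a^* = b^* = 2mr$ uniformly, while for $\mu \in [\mu_m, 2^{r/2-1}\mu_m]$ it yields $a^* = (2m+1)r$ uniformly, and $b^* = 2mr$ except at the tie point $\mu = \mu_m$ where also $b^* = (2m+2)r$ is optimal. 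Setting $\mu^0_m := \mu_m$, Propositions \ref{stratopt} and \ref{eql} give, for every $p \in P$ and every $\mu$ outside $\{\mu^0_m\}$,
\begin{equation*}
v^r_\mu(p) = \gamma^p_\mu(s(a^*), t(b^*)),
\end{equation*}
which is a rational function of $\mu$, $f_\mu(a^*)$, $f_\mu(b^*)$, and partial geometric expectations $\m{E}^p((1-\mu)^T)$ as in Proposition \ref{eq2}; in particular $v^r_\mu(p)$ is $C^\infty$ at every such $\mu$.

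For the quantitative bounds, I would differentiate the explicit formula
$f_\mu(n) = (1-2^{-n})(1-\mu^2)/(1+2^{n+1}\mu(1-\mu)^{-n}-\mu)$.
Since $n\mu = o(1)$ throughout both intervals for $n \in \{2mr,(2m+1)r\}$, one has $(1-\mu)^{-n} = 1+O(n\mu)$ and hence $|f_\mu'(n)| = O(2^{n+1})$. In the $\lambda$-range, the resulting formula $v^r_\lambda(1^{++}) = 1/(1+f_\lambda(2mr))$ and its analogue for $p = 1_n, 0_n$ give $|(v^r_\lambda)'(p)| = O(2^{2mr+1})$, which combined with $\lambda \geq \lambda_m = 2^{-4mr-1}$ yields the stated bound $2^{r/2+2}\lambda^{-1/2}$. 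In the $\mu$-range, the denominator $B = 1-f_\mu((2m+1)r)f_\mu(2mr)$ appearing in $v^r_\mu$ is of order $\sqrt{\mu}$ by Lemma \ref{maxf}, so the quotient rule contributes an extra $B^{-2}$ and produces the weaker $\mu^{-1}$ scaling $|(v^r_\mu)'(p)| \leq 2^{-r/2}\mu^{-1}$.

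The main obstacle is uniformity in $p \in P$, since $P$ is infinite. For a general $p = 1_n$ or $0_n$, one decomposes $v^r_\mu(p) = \alpha^p_\mu + \m{E}^p((1-\mu)^T) \cdot v^r_\mu(1^{++})$, where $T$ is the first return time to $1^{++}$ under the fixed strategies $(s(a^*), t(b^*))$ and $\alpha^p_\mu$ is the discounted warm-up payoff accrued before time $T$. The analysis of the generalised geometric distribution from \cite{PGP83} invoked in the proof of Proposition \ref{eq2} then applies verbatim and shows that $\m{E}^p((1-\mu)^T)$, $\alpha^p_\mu$ and their $\mu$-derivatives are uniformly $O(2^{2mr})$ in $p \in P$, so the derivative estimates obtained for $v^r_\mu(1^{++})$ transfer to general $p$ at the cost of the explicit constants $2^{r/2+2}$ and $2^{-r/2}$.
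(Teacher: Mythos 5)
Your overall architecture is the same as the paper's (locally constant optimal cutoffs from Lemma \ref{maxf}, differentiate the explicit payoff of Proposition \ref{eql}, then transfer to general $p\in P$ by decomposing at the return time to $1^{++}$), but two steps do not hold up as written. First, you set $\mu^0_m:=\mu_m$, i.e.\ you claim the switch of Player 2's optimal cutoff between $2mr$ and $(2m+2)r$ happens exactly at the left endpoint. The leading-order expansion $\widehat{f}_\mu(c\sqrt{2\mu})=1-(c+c^{-1})\sqrt{2\mu}+o(\sqrt{\mu})$ only shows a \emph{near}-tie at $\mu\approx\mu_m$; the exact crossing of $f_\mu(2mr)$ and $f_\mu((2m+2)r)$ is decided by the $o(\sqrt{\mu})$ terms and can lie strictly inside $\left[\mu_m,2^{r/2-1}\mu_m\right]$. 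If it does, then on part of your interval the optimal $b^*$ is $(2m+2)r$, your closed form for $v^r_\mu(p)$ uses the wrong strategy there, and the kink of $v^r_\mu$ sits at a point you have not excluded, so your differentiability claim fails. This is precisely why the lemma (and the paper's proof) only asserts the existence of some exceptional $\mu^0_m$ in the interval and then treats both regimes $\mu<\mu^0_m$ and $\mu>\mu^0_m$ separately.

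Second, and more importantly, the constant $2^{-r/2}$ in (\ref{diff1}) is the entire point of the lemma (it must vanish as $r\to\infty$ for the subsequent theorem), and your estimates do not produce it: from ``$|f'_\mu(n)|=O(2^{n+1})$'' with $n=(2m+1)r$ and ``the denominator is of order $\sqrt{\mu}$'' the quotient rule only gives a bound of order $2^{(2m+1)r}\sqrt{\mu}/\mu=O(2^{r/4}\mu^{-1})$, whose $r$-dependence goes the wrong way. To get $2^{-r/2}\mu^{-1}$ you must track the constants hidden in ``order $\sqrt{\mu}$'': with $C_1:=2^{-a^*}(2\mu)^{-1/2}=\sqrt{\mu_m/\mu}$ and $C_2:=2^{-b^*}(2\mu)^{-1/2}=2^{r}\sqrt{\mu_m/\mu}$ one has $1-f_\mu(a^*)f_\mu(b^*)\sim(C_1+C_1^{-1}+C_2+C_2^{-1})\sqrt{2\mu}$, and the bound comes from $\bigl|C_2^{-1}C_1-C_1^{-1}C_2\bigr|\le C_1^{-1}C_2^{-1}\bigl(C_1+C_1^{-1}+C_2+C_2^{-1}\bigr)^2$ together with $C_1C_2=2^{r}\mu_m/\mu\ge 2^{r/2+1}$; this computation (the paper's Step 2) is exactly what your sketch omits, so the constants $2^{-r/2}$ and $2^{r/2+2}$ are asserted rather than derived. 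A minor further slip: in the $\lambda$-range the needed comparison is via the upper end $\lambda\le 2^{r-1}\lambda_m$ (which bounds $\sqrt{\lambda/\lambda_m}\le 2^{(r-1)/2}$), not via $\lambda\ge\lambda_m$ as you wrote.
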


\begin{pf}
We start by proving inequality (\ref{diff1}). The proof proceeds in
three steps.

%
\begin{step}[{[\textit{Computation of $v^r_{\mu}(1^{++})$}]}]\label{st1}
By Lemma \ref{maxf}, there exists $m_1 \in\mathbb{N}^*$ such that
for all $m \geq m_1$ and $\mu\in [\mu_m,2^{r/2-1}\mu_m  ]$,
%
%
\[
\mathop{\argmax}_{n \in r\mathbb{N}} f_{\mu}= \{2mr+r \}.
\]
Hence, for such $m$ and $\mu$, $a(\mu):=2mr+r$ is an optimal strategy
for Player~1 in~$G_{\mu}(r)$.

By Lemma \ref{maxf} once again, there exists $m_2 \in\mathbb{N}^*$
such that for all $m \geq m_2$, there exists $\mu^0_m \in [\mu
_m,2^{r/2-1} \mu_m  ]$ such that for all $\mu\in(\mu
^0_m,2^{r/2-1} \mu_{m}]$
\[
\mathop{\argmax}_{n \in2r\mathbb{N}} f_{\mu}= \{2mr \},
\]
and for all $\mu\in[\mu_{m},\mu^0_m)$
\[
\mathop{\argmax}_{n \in2r\mathbb{N}} f_{\mu}= \{2mr+2r \}.
\]

Hence, for all $m \geq m_2$ and $\mu\in(\mu^0_m,2^{r/2-1} \mu
_{m}]$, the integer $b_1(\mu):=2mr$ is an optimal strategy for Player~2 in $G_{\mu}(r)$, and for all $\mu\in[\mu_{m},\mu^0_m)$, the
integer $b_2(\mu):=2mr+2r$ is an optimal\vspace*{1pt} strategy for Player~2 in
$G_{\mu}(r)$.
Thus, for all $m \geq \max(m_1,m_2):=m_3$ and $\mu\in (\mu
^0_m,2^{r/2-1}\mu_m  ]$, we have
%
%
\begin{eqnarray}\label{v1r}
v^r_{\mu}\bigl(1^{++}\bigr) &=&
g_{\lambda}\bigl(a(\mu),b_1(\mu)\bigr)
\nonumber\\[-8pt]\\[-8pt]\nonumber
&=& \bigl[1-f_{\lambda}
\bigl(b_1(\mu)\bigr) \bigr] \bigl[1-f_{\lambda}\bigl(a(\mu)
\bigr)f_{\lambda}\bigl(b_1(\mu)\bigr) \bigr]^{-1},
\end{eqnarray}
and for $\mu\in [\mu_m,\mu^0_m  )$
%
%
\begin{eqnarray}\label{v2r}
v^r_{\mu}\bigl(1^{++}\bigr) &=&
g_{\lambda}\bigl(a(\mu),b_2(\mu)\bigr)
\nonumber\\[-8pt]\\[-8pt]\nonumber
&=& \bigl[1-f_{\lambda}
\bigl(b_2(\mu)\bigr) \bigr] \bigl[1-f_{\lambda}\bigl(a(\mu)
\bigr)f_{\lambda}\bigl(b_2(\mu)\bigr) \bigr]^{-1}.
\end{eqnarray}
\end{step}

%
\begin{step}[{[\textit{Asymptotic expansion of $(v^r_{\mu})'(1^{++})$ as $\mu
\rightarrow0$}]}]\label{st2}
Fix $m \geq m_3$ and $\mu\in (\mu^0_m,2^{r/2-1}\mu_m  ]$.
Define $C_1\dvtx (0,1] \rightarrow\mathbb{R}_{+}^*$ and $C_2\dvtx (0,1]
\rightarrow\mathbb{R}_{+}^*$ by
%
%
\begin{equation}
C_1(\mu):=2^{-2mr-r}(2\mu)^{-1/2}= (
\mu_{m} )^{1/2} \mu^{-1/2}
\end{equation}
and
%
%
\begin{equation}
C_2(\mu):=2^{-2mr}(2\mu)^{-1/2}=2^r (
\mu_{m} )^{1/2} \mu ^{-1/2} \cdot
\end{equation}
Note that $ 2^{-a(\mu)}=C_1(\mu) \sqrt{2\mu}$ and
$ 2^{-b_1(\mu)}=C_2(\mu) \sqrt{2\mu}$. Moreover,\break  $1
\leq \llVert  C_1 \rrVert  _{\infty} \leq2^r$ and $2^r \leq\llVert
C_2 \rrVert  _{\infty} \leq2^{2r}$. The last two inequalities show
that the functions $C_1$ and $C_2$ are bounded and bounded away from
$0$, which will be useful in the following asymptotic expansions.

Since the functions $\lambda\rightarrow a(\lambda)$ and $\lambda
\rightarrow b_1(\lambda)$ are constant on $ (\mu
^0_m,2^{r/2-1}\mu_m  ]$, $\lambda\rightarrow f_{\lambda
}(a(\lambda))$ and $\lambda\rightarrow f_{\lambda}(b_1(\lambda))$
are differentiable at $\mu$, and we have
\begin{eqnarray*}
f_{\mu}'\bigl(a(\mu)\bigr)&=& \bigl(1-2^{-a(\mu)}
\bigr) \bigl(1+2^{a(\mu
)+1}\mu(1-\mu)^{-a(\mu)}-\mu \bigr)^{-2}
\\
&&{}\times \bigl[-2\mu \bigl(1+2^{a(\mu)+1}\mu(1-\mu)^{-a(\mu)}-\mu \bigr)
\\
&&\hspace*{15pt}{} -\bigl(1-\mu^2\bigr) \bigl(2^{a(\mu)+1} \bigl[(1-
\mu)^{-a(\mu)}-a(\mu ) \mu(1-\mu)^{a(\mu)-1} \bigr]-1 \bigr) \bigr].
\end{eqnarray*}
%
We deduce that
%
%
\begin{equation}
\label{df} f_{\mu}'\bigl(a(\mu)\bigr)\underset{\mu
\rightarrow0} {=}-2C_1(\mu)^{-1}(2\mu )^{-1/2}+o
\bigl(\mu^{-1/2} \bigr),
\end{equation}
and likewise
%
%
\begin{equation}
\label{df2} f_{\mu}'\bigl(b_1(\mu)\bigr)
\underset{\mu\rightarrow0} {=}-2C_2(\mu )^{-1}(2
\mu)^{-1/2}+o \bigl(\mu^{-1/2} \bigr).
\end{equation}
%
The same computation as in Lemma \ref{maxf} gives
\begin{eqnarray*}
f_{\mu}\bigl(a(\mu)\bigr) &\underset{\mu\rightarrow0} {=}&1-
\bigl(C_1(\mu)+C_1(\mu )^{-1}\bigr) (2
\mu)^{1/2}+o \bigl(\mu^{1/2} \bigr),
\\
f_{\mu}\bigl(b_1(\mu)\bigr)&\underset{\mu\rightarrow0}
{=}&1-\bigl(C_2(\mu)+C_2(\mu )^{-1}\bigr) (2
\mu)^{1/2}+o \bigl(\mu^{1/2} \bigr).
\end{eqnarray*}
Now\vspace*{1pt} we differentiate $v^r_{\mu}(1^{++})$ in (\ref{v1r}) (we omit the
dependence of $a$ and $b_1$ on~$\mu$). The derivative $(v^r_{\mu
})'(1^{++})=g_{\mu}'(a,b_1)$ is equal to
\begin{eqnarray*}
&&\frac{-f'_{\mu}(b_1)(1-f_{\mu}(a)f_{\mu}(b_1))+(1-f_{\mu
}(b_1))(f_{\mu}'(a)f_{\mu}(b_1)+f_{\mu}(a)f_{\mu}'(b_1))}{(1-f_{\mu
}(a)f_{\mu}(b_1))^2}
\\
&&\qquad = \frac{f_{\mu}'(a)f_{\mu}(b_1)(1-f_{\mu}(b_1))-f_{\mu
}'(b_1)(1-f_{\mu}(a))}{(1-f_{\mu}(a)f_{\mu}(b_1))^2}.
\end{eqnarray*}
When $\mu$ goes to 0, the numerator of this expression is (we omit the
dependence on~$\mu$)
\[
-2C_1^{-1}\bigl(C_2+C_2^{-1}
\bigr)+2C_2^{-1}\bigl(C_1+C_1^{-1}
\bigr)+o(1)=2\bigl(C_2^{-1}C_1-C_1^{-1}C_2
\bigr)+o(1).
\]
Hence,
\begin{eqnarray*}
\bigl(v^r_{\mu}\bigr)'\bigl(1^{++}
\bigr)&\underset{\mu\rightarrow0} {=}&\frac
{(C_2^{-1}C_1-C_1^{-1}C_2)}{(C_1+C_1^{-1}+C_2+C_2^{-1})^2}\mu ^{-1}+o
\bigl(\mu^{-1}\bigr).
\end{eqnarray*}
Since $\mu\leq2^{r/2-1} \mu_m$, we have
\[
\biggl\llvert \frac
{(C_2^{-1}C_1-C_1^{-1}C_2)}{(C_1+C_1^{-1}+C_2+C_2^{-1})^2}\biggr\rrvert \leq C_1^{-1}C_2^{-1}=2^{-r}
\frac{\mu}{\mu_{m}} \leq2^{-r/2-1}.
\]
The last two relations show that for $m$ large enough and $\mu^0_m <
\mu\leq2^{r/2-1} \mu_m$
%
%
\begin{equation}
\label{incaseg} \bigl\llvert \bigl(v^r_{\mu}\bigr)
\bigl(1^{++}\bigr)\bigr\rrvert \leq2^{-r/2-1} \mu^{-1}.
\end{equation}
Equation (\ref{v2r}) and similar computations show that the last
inequality is also true for $m$ large enough and $\mu\in [\mu
_m, \mu^0_m  )$.
\end{step}
%

\begin{step}[{[\textit{Computation of} $(v^r_{\mu})'(p)$ \textit{for} $p \in P$ \textit{and proof of
inequality} (\ref{diff1})]}]\label{st3}
For $a \in\mathbb{N}$ and $n \leq a$, let $T^n_{a}$ be the random
time needed by Player~1 to go from state $0_n$ to state $0_a$, when she
plays strategy $s(a)$. Let $m \geq m_3$ and $\mu\in (\mu
^0_m,2^{r/2-1}\mu_m  ]$.

If $n < a(\mu)$, then
%
%
\begin{equation}
\label{calcul0} v^r_{\mu}(0_n)=
\bigl(1-2^{-a(\mu)}\bigr) \mathbb{E} \bigl((1-\mu)^{T^n_{a(\mu)}+1}
\bigr)v^r_{\mu}\bigl(1^{++}\bigr).
\end{equation}
If $n \geq a(\mu)$, then
%
%
\begin{equation}
v^r_{\mu}(0_n)=\bigl(1-2^{-n}\bigr)
(1-\mu) v^r_{\mu}\bigl(1^{++}\bigr).
\end{equation}
Thus, for all $n \in\mathbb{N}$
\begin{eqnarray}\label{incase0}
\nonumber
\bigl\llvert \bigl(v^r_{\mu}
\bigr)'(0_n)\bigr\rrvert &\leq& \mathbb{E}
\bigl(T^n_{a(\mu
)}+1\bigr)+\bigl\llvert \bigl(v^r_{\mu}
\bigr)'\bigl(1^{++}\bigr)\bigr\rrvert
\nonumber\\[-8pt]\\[-8pt]\nonumber
&\leq& \mathbb{E}(T_{a(\mu)})+\bigl\llvert
\bigl(v^r_{\mu}\bigr)'\bigl(1^{++}
\bigr)\bigr\rrvert.
\end{eqnarray}
Similar arguments lead to
%
%
\begin{equation}
\label{incase1} \bigl\llvert v^r_{\mu}(1_n)
\bigr\rrvert \leq\mathbb{E}(T_{b_1(\mu)})+\bigl\llvert \bigl(v^r_{\mu}
\bigr)'\bigl(0^{++}\bigr)\bigr\rrvert.
\end{equation}
By (\ref{step3}), $\mathbb{E} (T_{a(\mu)} )=o(\mu^{-1})$
and $\mathbb{E} (T_{b_1(\mu)} )=o(\mu^{-1})$ when $\mu$
goes to $0$.

Combining inequalities (\ref{incaseg}), (\ref{incase0}) and (\ref
{incase1}), we get inequality (\ref{diff1}) for $m$ large enough and
$\mu\in (\mu^0_m,2^{r/2-1}\mu_m  ]$.
For $m$ large enough and $\mu\in (\mu_m,\mu^0_m  ]$, the
computations are similar.
\end{step}

Let us now prove inequality (\ref{diff2}). By Lemma \ref{maxf}, there
exists $m_4 \in\mathbb{N}^*$ such that for all $\lambda\in
[\lambda_m,2^{r-1}\lambda_m  ]$, $a(\lambda):=2mr$ is an
optimal strategy for both players in $G_{\lambda}(r)$, and
\[
\bigl(v^r_{\lambda}\bigr)'\bigl(1^{++}
\bigr)=-\frac{f_{\lambda}(a(\lambda
))'}{(1+f_{\lambda}(a(\lambda)))^2}.
\]
For $m \geq m_4$ and $\lambda\in [\lambda_m,2^{r-1}\lambda_m
 ]$, let $  C(\lambda):=\sqrt{\frac{\lambda
_{m}}{\lambda}}$. Then as in (\ref{df})
\[
f_{\lambda}'\bigl(a(\lambda)\bigr)\underset{\lambda
\rightarrow 0} {=}-2C(\lambda)^{-1}(2\lambda)^{-1/2}+o \bigl(
\lambda^{-1/2} \bigr).
\]
Since $  C(\lambda)^{-1} \leq2^{(r-1)/2}$ and $f_{\lambda
}(a(\lambda))$ goes to 1 when $\lambda$ goes to 0, we get inequality
(\ref{diff2}) for $(v^r_{\lambda})'(1^{++})$ and it extends to
$(v^r_{\lambda})'(p)$ with the method used in Step~\ref{st3}.
\end{pf}

%
\begin{theorem}
There exists $r_0 \in\mathbb{N}^*$ such that for all $r \geq r_0$,
$(v^r_n)$ and $(v^r_{\lambda})$ do not converge.
\end{theorem}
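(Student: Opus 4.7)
The non-convergence of $(v^r_\lambda)$ follows immediately, for every $r \geq 1$, from Proposition \ref{vl}: the two subsequences $v^r_{\lambda_m}(1^{++})$ and $v^r_{\mu_m}(1^{++})$ converge to $1/2$ and $(2^r + 2^{-r})/(2^r + 2^{-r} + 2)$ respectively, and these limits differ as soon as $2^r + 2^{-r} > 2$, i.e.\ for every $r \geq 1$. So the role of $r_0$ is purely to handle the sequence $(v^r_n)$.

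For $(v^r_n)$, my plan is to use Lemma \ref{neyman} as a transfer tool: I will apply it at the indices $n(m) := \lfloor 1/\lambda_m \rfloor$ and $n'(m) := \lfloor 1/\mu_m \rfloor$, choosing in each case the auxiliary index $n_0$ so that the discount-factor interval $[1/n, 1/n_0]$ sits entirely inside one of the two intervals on which Lemma \ref{boundedvar} controls $(v^r_\lambda)'$. Concretely, I would set
\[
  n_0(m) := \lceil 1/(2^{r-1}\lambda_m) \rceil \quad \text{and} \quad n'_0(m) := \lceil 1/(2^{r/2-1}\mu_m) \rceil,
\]
so that $[1/n(m), 1/n_0(m)] \subset [\lambda_m, 2^{r-1}\lambda_m]$ on the $\lambda$ side and $[1/n'(m), 1/n'_0(m)] \subset [\mu_m, 2^{r/2-1}\mu_m]$ on the $\mu$ side. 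Note that for $r \geq 2$, one indeed has $n_0 < n$ and $n'_0 < n'$, so Lemma \ref{neyman} applies.

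With these choices, the two terms on the right-hand side of Lemma \ref{neyman} are bounded as follows. The prefactor $n_0/n$ is essentially the inverse of the geometric ratio of the chosen interval, of order $2^{1-r}$ on the $\lambda$ side and $2^{1-r/2}$ on the $\mu$ side. The telescoping sum $\sum_{k=n_0}^{n-1} \|w^r_k - w^r_{k+1}\|$ is dominated by the integral $\int_{1/n}^{1/n_0} \|(v^r_\lambda)'\|\, d\lambda$; here the continuity of $\lambda \mapsto v^r_\lambda$ absorbs the single exceptional point $\mu^0_m$ flagged in Lemma \ref{boundedvar}. Plugging in the derivative bounds of that lemma, a direct antiderivative computation yields bounds of order $2^{-r}$ on the $\lambda$ side and of order $r \cdot 2^{-r/2}$ on the $\mu$ side, independently of $m$.

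Combining these estimates with Proposition \ref{vl}, the subsequences $v^r_{n(m)}(1^{++})$ and $v^r_{n'(m)}(1^{++})$ cluster within an error that vanishes as $r \to \infty$ around $1/2$ and around $(2^r + 2^{-r})/(2^r + 2^{-r} + 2)$ respectively, the latter tending to $1$ as $r \to \infty$. Choosing $r_0$ large enough that both error terms become smaller than a fixed fraction of the gap between the two target values then yields two subsequential limits of $(v^r_n(1^{++}))$ that do not coincide, hence $(v^r_n)$ does not converge. The main quantitative step will be the integration that produces the total-variation bound, but this amounts to rerunning the asymptotic expansions already done in the proof of Lemma \ref{boundedvar}, so no new qualitative difficulty should arise.
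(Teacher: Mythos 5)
Your proposal is correct and follows essentially the same route as the paper: non-convergence of $(v^r_\lambda)$ from Proposition \ref{vl}, then Lemma \ref{neyman} applied at $n\approx\lambda_m^{-1}$ and $n\approx\mu_m^{-1}$ with a geometric choice of $n_0$ fitting the windows of Lemma \ref{boundedvar}, the prefactor term bounded trivially and the telescoping sum bounded by integrating the derivative estimates, so that both error terms vanish as $r\to\infty$ and the two subsequential limits of $(v^r_n(1^{++}))$ separate for $r$ large. The only immaterial differences are your precise choice of $n_0$ (the paper takes $n_0=2^{-\lfloor r/2\rfloor+1}n$ on both sides) and your explicit remark that continuity absorbs the exceptional point $\mu^0_m$, which slightly sharpens the paper's direct appeal to the Mean Value theorem.
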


\begin{pf}
Let $r \geq2$. Recall that from Proposition \ref{vl},
\[
\lim_{m \rightarrow+ \infty} v^r_{\lambda_m}
\bigl(1^{++}\bigr)=1/2\quad\mbox {and}\quad\lim_{m \rightarrow+\infty}
v^r_{\mu_m}\bigl(1^{++}\bigr) =
\frac
{2^r+2^{-r}}{2^r+2^{-r}+2}:=w(r).
\]
In particular, $(v^r_{\lambda})$ does not converge. We are going to
show that for $r$ big enough, $(v^r_n(1^{++}))$ does not converge.

Let $m_0$ be as in Lemma \ref{boundedvar} and let $m \geq m_0$. Let
$n(m):=\mu_m^{-1}=2^{4mr+2r+1}$ and $n_0(m):=2^{- \lfloor r/2
 \rfloor+1}n(m)$. We now compare $v^r_{n(m)}$ and $v^r_{\mu
_m}$. Using Lemma \ref{neyman}, we get
\begin{eqnarray*}
\bigl\llVert v^r_{n(m)}-v^r_{\mu_m}
\bigr\rrVert & \leq& \frac{n_0(m)}{n(m)} \bigl\llVert v^r_{n_0(m)}-v^r_{2^{
\lfloor r/2  \rfloor-1}\mu_m}
\bigr\rrVert
+ \sum_{m'=n_0(m)}^{n(m)-1} \bigl\llVert
v^r_{1/m'}-v^r_{1/(m'+1)}\bigr\rrVert.
\end{eqnarray*}
Since the payoff function is bounded by 1, we have
\[
\frac{n_0(m)}{n(m)} \bigl\llVert v^r_{n_0(m)}-v^r_{2^{ \lfloor r/2
 \rfloor-1}\mu_m}
\bigr\rrVert \leq2^{- \lfloor r/2  \rfloor+1}.
\]

By inequality (\ref{diff1}) in Lemma \ref{boundedvar} and the mean
value theorem, we have
\begin{eqnarray*}
\sum_{m'=n_0(m)}^{n(m)-1} \bigl\llVert
v^r_{1/m'}-v^r_{1/(m'+1)}\bigr\rrVert &
\leq& 2^{-r/2+1} \sum_{m'=n_0(m)}^{n(m)-1} \int
_{1/(m'+1)}^{1/m'} \frac{1}{x}\,dx
\\
&=& 2^{-r/2+1} \int_{(n(m))^{-1}}^{(n_0(m))^{-1}}
\frac{1}{x}\,dx
\\
&=& 2^{-r/2+1} \bigl( \lfloor r/2 \rfloor-1 \bigr).
\end{eqnarray*}
Letting $m$ going to infinity, we deduce that
%
\[
\limsup_{n \rightarrow+\infty} v^r_n
\bigl(1^{++}\bigr) \geq w(r)-2^{-
\lfloor r/2  \rfloor+1}-2^{-r/2+1} \bigl(
\lfloor r/2 \rfloor-1 \bigr).
\]
Note that $\lim_{r \rightarrow+\infty} w(r)=1$ and that the term on
the right goes to 0 when $r$ goes to infinity.

Lemma \ref{neyman} for $n(m):=\lambda_m^{-1}$ and
$n_0(m):=2^{-\lfloor r/2 \rfloor+1}n(m)$ gives also an inequality of
the form
%
\[
\liminf_{n \rightarrow+\infty} v^r_n
\bigl(1^{++}\bigr) \leq1/2+t(r),
\]
where $\lim_{r \rightarrow+\infty} t(r)=0$. Hence, for $r$ large
enough, $(v^r_n(1^{++}))$ does not converge, thus $(v^r_n)$ does not converge.
\end{pf}
We have proved in this section that in a repeated game with symmetric
information, the value of the $n$-stage repeated game might not
converge. To do so, we have exploited the very flexible structure of
the first example of Section~\ref{sec2}. Indeed, we have managed to slow down
the oscillations
of $v_{\lambda}$, without changing much the dynamics of the game.

In the next section, we again take advantage of the flexibility of the
game, to provide other examples of
repeated games without an asymptotic value.
\section{Extension to other classes of repeated games}\label{sec4}
\subsection{State-blind repeated games}\label{sec4.1} \label{stateblind}
Consider the following state-blind repeated game $\Gamma$, with state
space $K= \{1^*,1^{++},1^T,1^{+},0^*,0^{++},0^+  \}$, action
sets $I= \{T,B,Q  \}$ for Player~1 and $J= \{L,R,Q
 \}$ for Player~2. The states $0^*$ and $1^*$ are absorbing
states. The payoff is 1 in states $1^{++}$, $1^{T}$, $1^+$ and $1^*$,
and 0 in states $0^{++}$, $0^+$ and $0^*$. The transitions are
described in Tables \ref{tab1}--\ref{tab5}.

Recall that in this model, the players do not observe any signal about
the state, and only observe past actions.

%
\begin{table}[b]
\tabcolsep=0pt
\tablewidth=200pt
\caption{State $1^{++}$}\label{tab1}
\begin{tabular*}{\tablewidth}{@{\extracolsep{\fill}}@{}lccc@{}}
\hline
& $\bolds{L}$ & $\bolds{R}$ & $\bolds{Q}$\\
\hline
$T$ & $1^{++}$ & $1^T$ & $1^*$ \\
$B$ & $1^{T}$ & $1^{++}$ & $1^*$ \\
$Q$ & $0^*$ & $0^*$ & $0^*$ \\
\hline
\end{tabular*}
\end{table}

\begin{table}[b]
\tabcolsep=0pt
\tablewidth=200pt
\caption{State $1^{T}$}\label{tab2}
\begin{tabular*}{\tablewidth}{@{\extracolsep{\fill}}@{}lccc@{}}
\hline
& $\bolds{L}$ & $\bolds{R}$ & $\bolds{Q}$\\
\hline
$T$ & $1^{++}$ & $3/4 \cdot1^+ + 1/4 \cdot1^{++}$ & $1^*$ \\
$B$ & $3/4 \cdot1^+ + 1/4 \cdot1^{++}$ & $1^{++}$ & $1^*$ \\
$Q$ & $0^*$ & $0^*$ & $0^*$ \\
\hline
\end{tabular*}
\end{table}

\begin{table}
\tabcolsep=0pt
\tablewidth=200pt
\caption{State $1^{+}$}\label{tab3}
\begin{tabular*}{\tablewidth}{@{\extracolsep{\fill}}@{}lccc@{}}
\hline
& $\bolds{L}$ & $\bolds{R}$ & $\bolds{Q}$\\
\hline
$T$ & $1^{++}$ & $1^+$ & $0^{++}$ \\
$B$ & $1^+$ & $1^{++}$ & $0^{++}$ \\
$Q$ & $0^*$ & $0^*$ & $0^*$ \\
\hline
\end{tabular*}
\end{table}

\begin{table}
\tabcolsep=0pt
\tablewidth=200pt
\caption{State $0^{++}$}\label{tab4}
\begin{tabular*}{\tablewidth}{@{\extracolsep{\fill}}@{}lccc@{}}
\hline
& $\bolds{L}$ & $\bolds{R}$ & $\bolds{Q}$\\
\hline
$T$ & $1/2 \cdot0^{++}+1/2 \cdot0^{+}$ &$0^{++}$ & $1^*$ \\
$B$ & $0^{++}$ & $1/2 \cdot0^{++}+1/2 \cdot0^{+}$ & $1^*$ \\
$Q$ & $0^*$ & $0^*$ & $1^*$ \\
\hline
\end{tabular*}
\end{table}

\begin{table}
\tabcolsep=0pt
\tablewidth=200pt
\caption{State $0^{+}$}\label{tab5}
\begin{tabular*}{\tablewidth}{@{\extracolsep{\fill}}@{}lccc@{}}
\hline
& $\bolds{L}$ & $\bolds{R}$ & $\bolds{Q}$\\
\hline
$T$ & $0^+$ &$0^{++}$ & $1^*$ \\
$B$ & $0^{++}$ & $0^+$ & $1^*$ \\
$Q$ & $1^{++}$ & $1^{++}$ & $1^*$ \\
\hline
\end{tabular*}
\end{table}

The idea of this example is to artificially recreate the dynamics of
the example of Section~\ref{counter}, replacing signals by the mixed
actions of one player.


Formally, let $a^*(\lambda)$ [resp., $b^*(\lambda)$] be defined as in
Proposition \ref{stratopt}.

Let $\sigma^* \in\Sigma$ be the following strategy for Player~1 in
$\Gamma^{1^{++}}_{\lambda}$: play $1/2 \cdot T +1/2 \cdot B$ if
$\mathbb{P}(k_m=0^{+}\mid k_m \notin \{0^*,1^* \}) \leq
1-2^{-a^*(\lambda)}$, otherwise play $Q$.

Let $\tau^* \in\mathcal{T}$ be the following strategy for Player~2:
play $1/2 \cdot L +1/2 \cdot R$ if $\mathbb{P}(k_m=1^{+}\mid k_m \notin
 \{0^*,1^* \}) \leq1-2^{-b^*(\lambda)}$, otherwise play $Q$.

Proceeding as in Section~\ref{counter}, one can show that $\sigma^*$
and $\tau^*$ are optimal strategies, respectively, for Player~1 and 2 in
$\Gamma^{1^{++}}_{\lambda}$.
Moreover, the probability measure on the histories of the game induced
by these strategies is the same as the
probability measure induced by $s(a^*(\lambda))$ and $t(b^*(\lambda
))$ in the example of Section~\ref{counter}. In particular, the two
examples have the same discounted value, thus $(v_{\lambda}(1^{++}))$
does not converge.


\subsection{Repeated games with one informed player}\label{sec4.2}
We now investigate a repeated game with perfect observation of the
actions, where Player~2 is fully informed about the state, while Player~1 has no information about it. As usual, both players observe past actions.

The state space is $K= \{1^*,1,0^*,0^{++},0^+  \}$, action
sets are $I= \{T,B,Q  \}$ for Player~1 and $J= \{L,R
 \}$ for Player~2. The states $0^*$ and $1^*$ are absorbing
states. The payoff is 1 in states $1$ and $1^*$, and 0 in states
$0^{++}$, $0^+$ and $0^*$. The transitions are described in the
following Tables \ref{tab6}--\ref{tab8}.

%
\begin{table}
\tabcolsep=0pt
\tablewidth=180pt
\caption{State $1$}\label{tab6}
\begin{tabular*}{\tablewidth}{@{\extracolsep{\fill}}@{}lcc@{}}
\hline
& $\bolds{L}$ & $\bolds{R}$\\
\hline
$T$ & $1$ & $0^{++}$ \\
$B$ & $0^{++}$ & $1^*$ \\
$Q$ & $0^*$ & $0^*$ \\
\hline
\end{tabular*}
\end{table}

\begin{table}
\tabcolsep=0pt
\tablewidth=180pt
\caption{State $0^{++}$}\label{tab7}
\begin{tabular*}{\tablewidth}{@{\extracolsep{\fill}}@{}lcc@{}}
\hline
& $\bolds{L}$ & $\bolds{R}$\\
\hline
$T$ & $1/2 \cdot0^{++}+1/2 \cdot0^{+}$ &$0^{++}$ \\
$B$ & $0^{++}$ & $1/2 \cdot0^{++}+1/2 \cdot0^{+}$ \\
$Q$ & $0^*$ & $0^*$ \\
\hline
\end{tabular*}
\end{table}

\begin{table}
\tabcolsep=0pt
\tablewidth=180pt
\caption{State $0^{+}$}\label{tab8}
\begin{tabular*}{\tablewidth}{@{\extracolsep{\fill}}@{}lcc@{}}
\hline
& $\bolds{L}$ & $\bolds{R}$\\
\hline
$T$ & $0^+$ &$0^{++}$ \\
$B$ & $0^{++}$ & $0^+$ \\
$Q$ & $1$ & $1$ \\
\hline
\end{tabular*}
\end{table}

Compared to the game of Section~\ref{stateblind}, states $1^{++}$, $
1^{T}$ and $1^{+}$ have been replaced by a single state $1$, which is
similar to the state $\omega^+$ in \citet{vigeral13}. The other states
have not been changed.

Let $\sigma^*$ be the following strategy for Player~1:
in state 1, play $(1-\sqrt{\lambda}) \cdot T + \sqrt{\lambda} \cdot
B$, and when the belief is in
$\Delta ( \{0^{++},0^+  \}  )$, play the same
strategy as in the preceding example.

Let $\tau^*$ be the following strategy for Player~2:
in state 1, play $(1-\sqrt{\lambda}) \cdot L + \sqrt{\lambda} \cdot
R$, and in states $0^{++}$ and $0^+$, play $(1/2 \cdot L + 1/2 \cdot R)$.
Proceeding the same way as in Section~\ref{counter}, one can show that
$\sigma^*$ and $\tau^*$ are asymptotically optimal strategies in
$\Gamma^1_{\lambda}$, and that $(v_{\lambda}(1))$ does not converge.


\subsection{Stochastic games with compact action sets}\label{sec4.3}
We now study a repeated game with perfect observation (states and
actions are known by both players) but where $I$ and $J$ are compact.
As mentioned in the \hyperref[sec1int]{Introduction}, this example does not relate to the
two conjectures, because $I$ and $J$ are not finite. But it yields a
simpler alternative counterexample to \citet{vigeral13}. This example
is similar to the example of Section \ref{counter} in terms of dynamics.

The state space is $K= \{1^*,1,0^*,0  \}$, and actions sets
are $I=[0,1]$ and
$J= \{0 \} \cup\bigcup_{m \in\mathbb{N}} 4^{-m}$. The
transition $q$ is defined by
\begin{eqnarray*}
q(1,x,y)&:=&(1-y) \cdot1+\bigl(y-y^2\bigr) \cdot0+y^2
\cdot1^*,
\\
q(0,x,y)&:=&(1-x) \cdot0+\bigl(x-x^2\bigr) \cdot1+x^2
\cdot0^*.
\end{eqnarray*}
Hence, Player~1 controls state 0 and Player~2 controls state 1.

Let $\lambda\in(0,1]$. A pure stationary strategy in $\Gamma
_{\lambda}$ for Player~1 (resp., 2) can be seen as an element of $I$
(resp., $J$).

\begin{remark}
The real number $x \in I$ corresponds to the absorbing risk $2^{-a}$ in
the example of Section~\ref{counter}. Indeed, when Player~1 plays $x$
in state 0, she waits on average $  x^{-1}$ stages before
switching to state 1, and the probability of absorbing in~$0^*$ before
reaching state 1 is approximately $x$. Recall that in the example of
Section~\ref{counter}, when Player~1 plays $a \in\mathbb{N}$, she
waits on average $2^a$ stages before quitting, and when she plays $Q$
the game is absorbed in state $0^*$ with probability $2^{-a}$. It is
the same for Player~2. As in our first example, Player~2 cannot take
any absorbing risk: only $y=4^{-m}$ for some $m \in\mathbb{N}$, or
$y=0$. But Player~1 can take any absorbing risk in $[0,1]$. That is why
we expect $(v_{\lambda})$ to oscillate, just as in the first example.
\end{remark}

The payoff in $\Gamma^1_{\lambda}$ given by a pair of strategies
$(x,y) \in I \times J$ is
\[
\gamma^1_{\lambda}(x,y)=\frac{(1-(1-\lambda)(1-y^2))(1-(1-\lambda
)(1-x))}{(1-(1-\lambda)(1-x y))(1-(1-\lambda)(1-x)(1-y))}.
\]
For any $x \in[0,1]$ (resp., $y \in[0,1]$) $\gamma^1_{\lambda}(x,\cdot)$
[resp., $\gamma^1_{\lambda}(\cdot ,y)$] is convex (resp., concave) and
reaches its minimum (resp., its maximum) at $y^*$ (resp., $x^*$) such that
\[
x^*=y^*=(\sqrt{\lambda}-\lambda) (1-\lambda)^{-1}.
\]
For $m \geq1$, we define $\lambda_m:=2^{-2m}$ and $\mu
_m:=2^{-2m-1}$. Then for $m$ large enough, $x_m=y_m=\sqrt{\lambda_m}$
are asymptotically optimal strategies in $\Gamma^1_{\lambda_m}$. Thus,
\[
\lim_{m \rightarrow+\infty} v_{\lambda_m}(1)=\lim_{m \rightarrow
+\infty}
\gamma^1_{\lambda_m}(x_m,y_m)=1/2.
\]

For $m$ large enough, $x_m=\sqrt{\mu_m}$ is an asymptotically optimal
strategy for Player~1 in $\Gamma^1_{\mu_m}$, and either $y_m=2\sqrt
{\mu_m}$ or $y'_m=1/2 \sqrt{\mu_m}$ is an optimal strategy for
Player~2 in $\Gamma^1_{\mu_m}$. We have
\[
\lim_{m \rightarrow+\infty} \gamma^1_{\mu_m}(x_m,y_m)=
\lim_{m
\rightarrow+\infty} \gamma^1_{\mu_m}
\bigl(x_m,y'_m\bigr)=5/9.
\]
Thus,
$ \lim_{m \rightarrow+\infty} v_{\mu_m}(1)=5/9$, and
$(v_{\lambda}(1))$ does not converge.

\section*{Acknowledgments}
This paper was partially written during a visit to the Hausdorff
Research Institute for Mathematics at the University of Bonn in the
occasion of the Trimester Program Stochastic Dynamics in Economics and
Finance. Special thanks are due to the organizers for the amazing
working conditions and the warm atmosphere.

I would like to thank J\'{e}r\^ome Renault and Fabien Gensbittel for
their help in clarifying the proof of the main result of this paper. I
am very grateful to Marco Scarsini for helping me to improve the
general presentation of this paper.

I also thank Guillaume Vigeral, Xavier Venel and Sylvain Sorin for
their interesting suggestions.





\printaddresses
\end{document}